\documentclass[11pt,reqno]{amsart} %
\pdfoutput=1
\usepackage{amsmath,amscd}
\usepackage{amsthm}
\usepackage{amssymb}
\usepackage{amsfonts}
\usepackage{latexsym}
\usepackage{url}
\usepackage{latexsym}
\usepackage{color}
\usepackage{enumerate,fancybox}

\usepackage{epsf}
\usepackage{graphicx}

\DeclareMathOperator\dist{dist}
\DeclareMathOperator\diam{diam}
%%  Renewed

%% New Commands

%\parskip=4pt
%%  Matrices

% N.S.' DEFINITIONS
%\newcommand{\capy}{\mathrm{cap}

\newcommand{\capD}{\mathrm{cap}(\overline{\mathbb{D}})}

\hyphenation{Fi-gu-re Sty-li-a-no-pou-los archi-pe-la-go}

%\renewcommand{\theenumi}{\Roman{enumi}}
%\renewcommand{\labelenumi}{\theenumi/}

% Bjorn's definitions

\DeclareMathOperator{\supp}{\mathrm{supp}}
\DeclareMathOperator{\capy}{\mathrm{cap}}

%\def\Z#1{\mathbb{Z}^{#1}}
%\def\N#1{\mathbb{N}^{#1}}

%%%%%%%%%%%%%%%%%%%%%%%%%%%%%%%%%%%

\def\C{{\mathbb C}}

%%%
%%% Theorem Styles
%%%
%\newtheorem{theorem}{Theorem}[section]
%\newtheorem{proposition}{Proposition}[section]
%\newtheorem{lemma}{Lemma}[section]
%\newtheorem{definition}{Definition}[section]
%\newtheorem{corollary}{Corollary}[section]
%\newtheorem{result}{Result}[section]
%\newtheorem{property}{Property}[section]
%\newtheorem{conjecture}{Conjecture}[section]
%\newtheorem*{mythm}{Theorem}
%\newtheorem*{mylemma}{Lemma}

% These definitions increase the numbering
\newtheorem{theorem}{Theorem}[section]
\newtheorem{lemma}[theorem]{Lemma}
\newtheorem{proposition}[theorem]{Proposition}

\newtheorem{example}[theorem]{Example}

\newtheorem{definition}[theorem]{Definition}

\numberwithin{equation}{section}

%Vilmos definitions

\def\restr#1{\,\vrule\,\lower1.75ex\hbox{$#1$}}

\def\be{\begin{equation}}
\def\ee{\end{equation}}
\def\bea{\begin{eqnarray}}
\def\eea{\end{eqnarray}}
\def\bean{\begin{eqnarray*}}
\def\eean{\end{eqnarray*}}

\def\ov{\overline}

\font\tenopen = cmbx10
\font\sevenopen = cmbx7
\font\fiveopen = cmbx5
\newfam\openfam

\textfont\openfam = \tenopen
\scriptfont\openfam = \sevenopen
\scriptscriptfont\openfam = \fiveopen

%%%%%%%%%%%%%%%%%%%%%%%%%%%%%%%%%%%%%%%%%%%%%%%%

\input colordvi

%%%%%%%%%%%%%%%%%%%%%%%%%%%%%%%%%%%%%%%%%%%%%%%%%

%%
%% MAIN DOCUMENT
%%

\begin{document}
\title[]
{Relative Asymptotics of Orthogonal Polynomials for Perturbed Measures}
%    \dedicatory{}

\date{\today}

\thanks{{\it Acknowledgements.}
The first author was partially supported by the
U.S.\ National Science Foundation grants  DMS-1412428 and DMS-1516400.
The second author was supported by the University of Cyprus grant 3/311-21027.
}

\author[E.B.\ Saff]{E.B.\ Saff}
\address{Center for Constructive Approximation, Department of Mathematics\\ Vanderbilt University\\
         1326 Stevenson Center\\
         37240 Nashville, TN\\
         USA}
         \email{edward.b.saff@Vanderbilt.edu}
\urladdr{http://my.vanderbilt.edu/edsaff/}

\author[N. Stylianopoulos]{N. Stylianopoulos}
\address{Department of Mathematics and Statistics,
         University of Cyprus, P.O. Box 20537, 1678 Nicosia, Cyprus}
\email{nikos@ucy.ac.cy}
%\urladdr{http://ucy.ac.cy/~nikos}
\urladdr{http://ucy.ac.cy/\textasciitilde nikos}

\keywords{Orthogonal polynomials, Christoffel function, Bergman polynomials, perturbed measures}

\subjclass[2000]{30E05,  65E05, 42C05, 30C10, 94A08, 30C40, 30C70, 41A10, 31A15}

\begin{abstract}
We survey and present some new results that are related to the behavior of orthogonal
polynomials in the plane under small perturbations of the measure of orthogonality.
More precisely, we introduce the notion of a
polynomially small (PS) perturbation of a measure. Namely, if $\mu_0 \ge \mu_1$ and
$\{p_n(\mu_j,z)\}_{n=0}^\infty,  j=0,1,$ are the associated orthonormal polynomial
sequences, then $\mu_0$ a PS perturbation of $\mu_1$ if
$\|p_n(\mu_1,\cdot)\|_{L_2(\mu_0-\mu_1)}\to 0$, as $n\to\infty$.
In such a case we establish relative asymptotic results for the two sequences of
orthonormal polynomials. We also provide results dealing with the behaviour of the zeros
of PS perturbations of area orthogonal (Bergman) polynomials.
\end{abstract}

\maketitle
%\tableofcontents
\allowdisplaybreaks

\section{Introduction}\label{sec:intro}
%\subsection{Bergman spaces and polynomials}\label{subsec:bergman}
Let $\mu_0$ and $\mu_1$ be two finite Borel measures having compact and infinite supports
$S_j:=\supp(\mu_j)$ in the complex plane $\mathbb{C}$, with
$\mu_0 \geq \mu_1.$  Then there exists a measure $\mu_2$ such that

\begin{equation}\label{eq:mu-sum}
\mu_0:=\mu_1+\mu_2,
\end{equation}
and we denote the support of $\mu_2$ by $S_2:=\supp(\mu_2)$. We shall regard  $\mu_0$ as a perturbation of $\mu_1$
and investigate when such a perturbation is ``small" in the sense of Definition~\ref{PSP} below.

The three measures yield three Lebesgue spaces $L^2(\mu_j), j=0,1,2,$ with respective inner products
$$
\langle f,g\rangle_{\mu_j} := \int f(z) \overline{g(z)} d\mu_j (z)
$$
and norms
$$
\|f\|_{L^2(\mu_j)}:=\sqrt{\langle f,f\rangle_{\mu_j}}.
$$

Let $\{p_n(\mu_j,z)\}_{n=0}^\infty,  j=0,1,$ denote the sequence of
orthonormal polynomials associated with $\mu_j$; that is, the unique sequence of the form
\begin{equation}\label{def:pnmuj}
p_n(\mu_j,z) = \gamma_n(\mu_j) z^n+ \cdots, \quad \gamma_n(\mu_j)>0,\quad n=0,1,2,\ldots,
\end{equation}
satisfying $\langle p_m(\mu_j,\cdot),p_n(\mu_j,\cdot)\rangle_{\mu_j}=\delta_{m,n}.$\

The corresponding monic polynomials
$p_n(\mu_j,z)/\gamma_n(\mu_j)$, can be equivalently defined by the extremal property
\begin{equation}\label{eq:minimal1}
\left\|\frac{1}{\gamma_n(\mu_j)}{p_n(\mu_j,\cdot)}\right\|_{L^2(\mu_j)}:=
\min_{z^n+\cdots}\|z^n+\cdots\|_{L^2(\mu_j)}.
\end{equation}\

A related extremal problem leads to the sequence $\{\lambda_n(\mu_j,z)\}_{n=0}^\infty$ of the so-called \emph{Christoffel functions}
associated with the measure $\mu_j$. These are defined, for any $z\in\mathbb{C}$, by
\begin{equation}\label{eq:Chrfun-def}
\lambda_n(\mu_j,z):=\inf\{\|P\|_{L^2(\mu_j)}^2,\, P\in\mathbb{P}_n\mbox{ with }  P(z)=1\},
\end{equation}
where $\mathbb{P}_n$ stands for the space of complex polynomials of degree up to $n$.
Using the Cauchy-Schwarz inequality it is easy to verify
(see, e.g., \cite[Section~3]{To05}) that
\begin{equation}\label{eq:Chrfun-pro}
\frac{1}{\lambda_n(\mu_j,z)}=\sum_{k=0}^n|p_k(\mu_j,z)|^2,\quad z\in\mathbb{C}.
\end{equation}
Clearly, $\lambda_n(\mu_j,z)$ is the reciprocal of the diagonal of the kernel polynomial
\begin{equation}\label{eq:kernel-poly-def}
K_n(\mu_j,z,\zeta):=\sum_{k=0}^n \overline{p_k(\mu_j,\zeta)} p_k(\mu_j,z).
\end{equation}
Since $\mu_0\ge\mu_1$, the following inequality is an immediate consequence of (\ref{eq:Chrfun-def})
\begin{equation}\label{eq:comp-pri-lambda}
\lambda_n(\mu_1,z)\le \lambda_n(\mu_0,z),\quad n=0,1,\ldots.
\end{equation}

\begin{definition}\label{PSP}
With the $\mu_j$'s as in \textup{(\ref{eq:mu-sum})},  we say that $\mu_0$ is a
\textbf{polynomially small (PS) perturbation}  of $\mu_1$ provided that $\mu_2$ is not the zero
measure and
\begin{equation}\label{eq:main-gen-asu1}
\lim_{n\to\infty}\|p_n(\mu_1,\cdot)\|_{L^2(\mu_2)}=0.
\end{equation}
\end{definition}

The next result emphasizes the fact that $\mu_0$ being a PS perturbation of $\mu_1$
implies strong constraints on the relative position of the support of $\mu_2$.
Its proof will be given in Section~4.

We  denote by $\Omega$ the unbounded component of $\overline{\mathbb{C}}\setminus S_1$ and
by $\textup{Pc}(S_1)$ the polynomial convex hull of $S_1$, i.e. $\textup{Pc}(S_1):=\overline{\mathbb{C}}\setminus\Omega$.
We use $\capy(E)$ to denote the \emph{(logarithmic) capacity} of a compact set $E$.

\begin{proposition}\label{prop:Co}
If $\mu_0$ is a PS perturbation of $\mu_1$, then
$S_2\subset\textup{Pc}(S_1)$; hence, $S_0\subset\textup{Pc}(S_1)$ and
$\capy(S_0)=\capy(S_1)$.
\end{proposition}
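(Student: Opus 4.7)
The plan is to argue by contradiction, supposing that some $z_0 \in S_2$ lies in the unbounded component $\Omega$ of $\overline{\mathbb{C}} \setminus S_1$, and showing that $\|p_n(\mu_1,\cdot)\|_{L^2(\mu_2)}$ cannot then tend to zero. The crux is a classical Bernstein-Walsh type upper estimate for the Christoffel function $\lambda_n(\mu_1,w)$, uniform on a small closed disk around $z_0$; via the identity (\ref{eq:Chrfun-pro}), this forces $\sum_{k=0}^n |p_k(\mu_1,w)|^2$ to grow exponentially in $n$ throughout the disk, which, upon integration against $\mu_2$, collides with the PS hypothesis.

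In detail, first I fix $r > 0$ small enough that $K := \overline{B(z_0,r)} \subset \Omega$; since $z_0 \in \supp \mu_2$, the quantity $\delta := \mu_2(B(z_0,r))$ is strictly positive. I then invoke the classical asymptotics of Fekete (or Chebyshev) polynomials $F_n$ of degree $n$ for $S_1$: one has $\|F_n\|_{L^\infty(S_1)}^{1/n} \to \capy(S_1)$ and $|F_n(w)|^{1/n} \to \capy(S_1)\, e^{g_\Omega(w,\infty)}$, uniformly on compact subsets of $\Omega$. Using the rational competitor $Q_n^{(w)}(\zeta) := F_n(\zeta)/F_n(w)$ (which satisfies $Q_n^{(w)}(w)=1$) in the extremal problem (\ref{eq:Chrfun-def}), and noting that $g_\Omega(\cdot,\infty) \geq g_0 > 0$ on the compact set $K \subset \Omega$, one obtains a uniform estimate
\[
\lambda_n(\mu_1,w) \leq C \rho^n, \qquad w \in K,\ n \geq n_0,
\]
for some constants $C > 0$ and $\rho \in (0,1)$.

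Combining this bound with (\ref{eq:Chrfun-pro}) and integrating over $B(z_0,r)$ against $\mu_2$ then yields
\[
\sum_{k=0}^n \int_{B(z_0,r)} |p_k(\mu_1,w)|^2 \, d\mu_2(w) \;\geq\; \frac{\delta}{C}\,\rho^{-n},
\]
so the left-hand side must grow exponentially in $n$. But the PS hypothesis gives $\|p_k(\mu_1,\cdot)\|_{L^2(\mu_2)} \to 0$, hence the individual summands are uniformly bounded and the left-hand side can grow at most linearly in $n$: contradiction. Therefore $S_2 \cap \Omega = \emptyset$, i.e.\ $S_2 \subset \textup{Pc}(S_1)$. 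Since $S_0 = S_1 \cup S_2 \subset \textup{Pc}(S_1)$, monotonicity of capacity combined with $\capy(\textup{Pc}(S_1)) = \capy(S_1)$ delivers the final equality $\capy(S_0) = \capy(S_1)$.

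The main obstacle is securing the uniform Bernstein-Walsh bound on $\lambda_n(\mu_1,\cdot)$ over $K$; this rests on the uniform convergence of $\frac{1}{n}\log|F_n(w)|$ to $\log\capy(S_1) + g_\Omega(w,\infty)$ on compact subsets of $\Omega$, which in turn follows from weak-$*$ convergence of the normalized Fekete measures to the equilibrium measure of $S_1$ together with the continuity of $\log|w-\zeta|$ in $\zeta \in S_1$ for $w$ bounded away from $S_1$. This step implicitly uses $\capy(S_1) > 0$; everything else in the argument is routine book-keeping.
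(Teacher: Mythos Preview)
Your argument is correct and takes a genuinely different, more elementary route than the paper's. The paper proceeds by a case split on the nature of a hypothetical $z_0\in S_2\cap\Omega$: if $z_0$ is an isolated point of $S_2$, it invokes Ambroladze's theorem to obtain $\limsup_n|p_n(\mu_1,z_0)|^{1/n}\ge e^{g_\Omega(z_0,\infty)}>1$; if $z_0$ is an accumulation point, it appeals to Widom's bound on the number of zeros of $p_n(\mu_1,\cdot)$ in a small disk in $\Omega$, factors those (boundedly many) zeros out as a polynomial $q_n$, and then uses a Stahl--Totik lower growth estimate for the zero-free quotient $p_n/q_n$. Your approach sidesteps both external results and the case distinction by going through the Christoffel function instead: a single Fekete/Chebyshev competitor in (\ref{eq:Chrfun-def}) gives the uniform exponential decay $\lambda_n(\mu_1,w)\le C\rho^n$ on $K$, and (\ref{eq:Chrfun-pro}) converts this into exponential growth of $\sum_{k\le n}|p_k(\mu_1,w)|^2$, which after integration against $\mu_2$ collides with the at-most-linear growth forced by the PS hypothesis. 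This is both shorter and more self-contained; the paper's route has the mild advantage of yielding pointwise information about $|p_n(\mu_1,z_0)|$ itself rather than only the cumulative sum.

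The one loose end you flag---the assumption $\capy(S_1)>0$---is easily closed and you should say so explicitly: if $\capy(S_1)=0$ then $\|F_n\|_{L^\infty(S_1)}^{1/n}\to 0$, while $|F_n(w)|\ge\dist(K,S_1)^n$ for $w\in K$ since the Fekete points lie in $S_1$; hence the ratio $\|F_n\|_{L^\infty(S_1)}/|F_n(w)|$ still decays geometrically on $K$ and the rest of your argument goes through verbatim.
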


\subsection{Some examples}\label{sec:examples}
Below we provide a list of $\mu_0=\mu_1+\mu_2$, where $\mu_0$ is a PS perturbation
of $\mu_1$.

Throughout the paper we use $A|_E$ to denote the area measure on a bounded set $E$
and $s|_\Gamma$ to denote the arclength measure on a rectifiable curve $\Gamma$.

\begin{itemize}
\item[(i)]
Let $G$ be a bounded Jordan domain (or the union of finitely many bounded Jordan domains with pairwise
disjoint  closures) and let $B$ be a compact subset of $G$. Take $\mu_1=A|_{G\setminus B}$,
$\mu_2=w(z)A|_B$ and $\mu_0=\mu_1+\mu_2$, where  $w(z)$ is integrable on $B$.
Then Lemma~2.2 of \cite{SSST} implies the PS property.
\item[(ii)]
Let $\Gamma$ be a closed piecewise analytic Jordan curve without cusps and let $B$ be a
compact subset in the interior of $\Gamma$. Take $\mu_1=s|_\Gamma$,
$\mu_2=w(z)A|_B$, where $w(z)$ is integrable on $B$. Then, $\mu_0=\mu_1+\mu_2$ is a PS perturbation
of $\mu_1$. (See Theorem~2.1 of \cite{Pr03}.)
\item[(iii)]
Here we assume $\mu_1$ is  in the Szeg\H{o} class on the unit circle; i.e., the absolutely
continuous part
$w(\theta)$ of $\mu_1$ with respect to arclength on the unit circle $|z|=1$ satisfies the condition
$\int_0^{2\pi}\log(w(\theta))d\theta>-\infty$,
and we let $\mu_2$ be a finite measure supported on a compact set inside the unit circle.
Then $\mu_0=\mu_1+\mu_2$ is a PS perturbation of $\mu_1$ thanks to Corollary 2.4.10 of \cite{SimBoI}.
(We remark that \cite{NVY} contains a related result for the case of purely absolutely continuous measures.)
\item[(vi)]
Let $\Gamma$ be a piecewise analytic Jordan curve without cusps, let $G$ denote its interior
and let $\mu_1=A|_G$. If the exterior angle at
$z\in\Gamma$ is less than $\pi/2$, then
$\lim_{n\to\infty}p_n(\mu_1,z)=0$; see \cite[Theorem 1.3]{St16}, and therefore
$\mu_0=\mu_1+t\delta_z$, $t>0$, where $\delta_z$ is the Dirac measure at $z$, is a PS
perturbation of $\mu_1$. On the other hand, if the exterior angle at $z$ is $\pi$, then
$|p_n(\mu_1,z)|\ge Cn^{1/2}$, for some positive constant $C$ and infinitely many $n$;
see \cite[p. 1097]{ToVa15}, and thus $\mu_0$ is \emph{not} a PS perturbation of $\mu_1$,
\end{itemize}

In all the above examples that establish that $\mu_0$ is a PS perturbation of $\mu_1$, the polynomials
$p_n(\mu_1,z)$ go to zero uniformly on the support of
$\mu_2$. The following proposition provides a class of examples of PS perturbations where this
uniform convergence (and in fact pointwise convergence) to zero does not hold uniformly in $S_1$.
Its proof will be given in Section~\ref{Proofs}.

We say that a Jordan curve $\Gamma$  is $C(p,\alpha)$-smooth if $\Gamma$ has an arclength
parametrization $\gamma$ that is $p$-times differentiable and its $p$-th derivative
belongs to the class Lip $\alpha$, $0<\alpha<1$.

\begin{proposition}\label{prop1}
Let $G$ be a bounded Jordan domain with boundary $\Gamma$ in the class $C(2,\alpha)$, $\alpha>1/2$.
If $\mu_1=s|_\Gamma$ is the arclength measure on $\Gamma$ and $\mu_2=A|_G$ is the
area measure on $G$, then
$\mu_0=\mu_1+\mu_2$ is a PS perturbation of $\mu_1$.
\end{proposition}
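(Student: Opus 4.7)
The plan is to transfer the problem from $G$ to the unit disk $\mathbb{D}$ via the interior conformal map, and then exploit the orthonormality of the push-forwards as an $H^2(\mathbb{D})$-family. Let $\psi:\mathbb{D}\to G$ be a Riemann map. Since $\Gamma\in C(2,\alpha)$, Kellogg's theorem (combined with the assumption $\alpha>1/2$ which the authors may exploit for additional regularity) guarantees that $\psi'$ extends continuously and without zeros to $\overline{\mathbb{D}}$; in particular $\psi'\in L^\infty(\mathbb{D})$, and a holomorphic branch of $\sqrt{\psi'}$ exists on $\mathbb{D}$ with continuous boundary values. I would then define
$$
q_n(w):=p_n(\mu_1,\psi(w))\,\sqrt{\psi'(w)},\qquad w\in\mathbb{D},
$$
which is analytic on $\mathbb{D}$ and continuous on $\overline{\mathbb{D}}$.

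The first key observation is that $\{q_n\}_{n=0}^{\infty}$ is an \emph{orthonormal} family in $H^2(\mathbb{D})$ with respect to the inner product $\langle f,g\rangle=\int_0^{2\pi}f(e^{i\theta})\overline{g(e^{i\theta})}\,d\theta$; indeed, the change of variables on $\Gamma$ yields
$$
\int_0^{2\pi}q_n(e^{i\theta})\,\overline{q_m(e^{i\theta})}\,d\theta
=\int_\Gamma p_n(\mu_1,z)\,\overline{p_m(\mu_1,z)}\,|\psi'(e^{i\theta(z)})|\cdot|\psi'(e^{i\theta(z)})|^{-1}\,ds(z)=\delta_{nm}.
$$
Bessel's inequality then forces $q_n\to 0$ weakly in $H^2(\mathbb{D})$, so each Taylor coefficient $a_k^{(n)}$ of $q_n$ tends to $0$ as $n\to\infty$ for every fixed $k\ge 0$, while Parseval provides the uniform bound $\sum_{k\ge 0}|a_k^{(n)}|^2=1/(2\pi)$.

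The second step transfers the integral over $G$ to $\mathbb{D}$:
$$
\int_G |p_n(\mu_1,z)|^2\,dA(z)=\int_{\mathbb{D}}|q_n(w)|^2\,|\psi'(w)|\,dA(w)\le \|\psi'\|_{L^\infty(\mathbb{D})}\,\pi\sum_{k=0}^\infty\frac{|a_k^{(n)}|^2}{k+1},
$$
using the standard evaluation $\int_{\mathbb{D}}|w|^{2k}\,dA=\pi/(k+1)$. A routine splitting argument, making the tail $k\ge K$ small by the factor $1/(k+1)$ and the uniform $\ell^2$ bound, and then sending $n\to\infty$ in the finite head using $a_k^{(n)}\to 0$, shows that the right-hand side tends to $0$. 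This gives $\|p_n(\mu_1,\cdot)\|_{L^2(\mu_2)}\to 0$, which is the PS property.

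The main obstacle is ensuring that the push-forwards $q_n$ are genuine $H^2(\mathbb{D})$ functions and that the two change-of-variables identities (on the boundary and on the interior) are valid; this is precisely where the $C(2,\alpha)$ smoothness of $\Gamma$ enters, via Kellogg's theorem guaranteeing $\psi'\in C^{1,\alpha}(\overline{\mathbb{D}})$ with $\psi'\ne 0$. Once this regularity is in place, the rest of the argument is purely Hilbert-space soft analysis and does not require any strong asymptotics for $p_n(\mu_1,\cdot)$, consistent with the authors' remark that $p_n(\mu_1,\cdot)$ need not converge to zero uniformly (or even pointwise) on $\Gamma$.
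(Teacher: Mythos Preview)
Your argument is correct and constitutes a genuinely different proof from the one in the paper. The paper proceeds via Suetin's \emph{strong asymptotics} $p_n(z)=\Phi^n(z)\sqrt{\Phi'(z)}\,[1+O(1/n)]$ on $\Gamma$ (using the \emph{exterior} map $\Phi$), then computes the primitive $Q_{n+1}(z):=\int_{z_0}^{z}p_n(\zeta)\,d\zeta$ along $\Gamma$, integrates by parts to obtain $Q_{n+1}=O(1/n)$ uniformly on $\Gamma$, and finally applies Green's formula $\int_G|p_n|^2\,dA=\frac{1}{2i}\int_\Gamma p_n\,\overline{Q_{n+1}}\,dz$ together with Cauchy--Schwarz. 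This yields the quantitative bound $\|p_n(\mu_1,\cdot)\|_{L^2(\mu_2)}^2=O(1/n)$, and it is precisely in invoking Suetin's theorem and the boundedness of $\Phi''$ on $\Gamma$ that the full $C(2,\alpha)$, $\alpha>1/2$, hypothesis is used.

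Your route, by contrast, is purely soft: the \emph{interior} map $\psi$ turns $\{p_n(\mu_1,\psi(\cdot))\sqrt{\psi'}\}$ into an orthonormal system in $H^2(\mathbb{D})$, weak convergence forces each Taylor coefficient to vanish in the limit, and the compact embedding $H^2(\mathbb{D})\hookrightarrow L^2_a(\mathbb{D})$ (encoded in the weights $\pi/(k+1)$) finishes the job. The trade-off is clear: you obtain only $o(1)$ with no rate, but in exchange you never touch strong asymptotics, and in fact your argument needs only that $\psi'$ extend continuously and zero-free to $\overline{\mathbb{D}}$, which holds already for $\Gamma\in C(1,\alpha)$ by Kellogg--Warschawski. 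So your proof actually establishes the proposition under a weaker smoothness assumption than stated. One cosmetic point: the displayed change-of-variables line with ``$|\psi'|\cdot|\psi'|^{-1}$'' is miswritten; the factor $|\psi'|$ produced by $\sqrt{\psi'}\,\overline{\sqrt{\psi'}}$ \emph{is} the arclength Jacobian $ds=|\psi'(e^{i\theta})|\,d\theta$, so there is no cancellation---the identity reads $\int_0^{2\pi}q_n\overline{q_m}\,d\theta=\int_\Gamma p_n\overline{p_m}\,ds$ directly.
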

We remark that under the assumption of the proposition, the inequality
\begin{equation}\label{eq:Suetin-rate}
|p_n(\mu_1,z)|\le c\frac{1}{n^{p+\alpha}},
\end{equation}
for some positive constant $c$, holds locally uniformly inside $\Gamma$; see \cite[Theorem 2.4]{Su66b}.
We also note that for the special case of analytic curves $\Gamma$, Proposition~\ref{prop1} follows
from \cite[Corollary 2.5]{Si13}.

The paper is organised  follows: In Section 2 we state the main results along with some essential lemmas. Section 3
is devoted to the behaviour of the zeros of certain PS perturbations  of Bergman polynomials on the union of Jordan regions.
In Section 4 we provide the proofs of our main results.

\section{Main results on PS perturbations}\label{sec:main}
The main purpose of the paper is to show that the following two associated pairs of sequences
$
\{\gamma_n(\mu_0),\gamma_n(\mu_1)\},\quad
\{p_n(\mu_0,z),p_n(\mu_1,z)\},
$
have comparable asymptotics when the measure $\mu_0$ is a polynomially small perturbation of $\mu_1$.
Furthermore, under a somewhat stronger condition we show that the two Christoffel sequences
$
\{\lambda_n(\mu_0,z),\lambda_n(\mu_1,z)\},
$
likewise have comparable asymptotics. The proofs of our results are given in Section 4.

We use ${\rm Co}(E)$  to denote the convex hull of a set  $E$ and note that $\textup{Co}(S_1)=\textup{Co}(S_0)$;
see Proposition~\ref{prop:Co}.

\begin{theorem}\label{thm:main-gen}
\footnote{This theorem, along with Example~\ref{Ex1}, was presented by the second author
at the CMFT~2013 conference, held in Shantou, China, in June 2013.}
If the measure $\mu_0$ is a PS perturbation of the measure $\mu_1$,
then the following hold:
\begin{itemize}
\item[(i)]
$$
\lim_{n\to\infty}\gamma_n(\mu_1)/\gamma_n(\mu_0)=1;
$$
\item[(ii)]
$$
\lim_{n\to\infty}\|p_n(\mu_1,\cdot)-p_n(\mu_0,\cdot)\|_{L^2(\mu_0)}=0;
$$
\item[(iii)]
uniformly on compact subsets of $\overline{\mathbb{C}}\setminus{\rm Co}(S_0)$ (and for $z\in\Omega$,
provided $p_n(\mu_0,\cdot)$ does not vanish in ${\rm Co}(S_0)\cap\Omega$ for large $n\in\mathbb{N}$),
$$
\lim_{n\to\infty}p_n(\mu_1,z)/p_n(\mu_0,z)=1;
$$
\end{itemize}
Furthermore, if
\begin{equation}\label{eq:main-gen-asu2}
\sum_{j=0}^\infty \|p_j(\mu_1,\cdot)\|_{L^2(\mu_2)}^2<\infty,
\end{equation}
then
\begin{equation}\label{eq:main-gen-asu21}
\lim_{n\to\infty}\lambda_n(\mu_0,z)/\lambda_n(\mu_1,z)=1,
\end{equation}
uniformly on compact subsets of $\Omega$.
\end{theorem}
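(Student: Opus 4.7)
For (i), the plan is to exploit the extremal characterization \eqref{eq:minimal1}. Since $\mu_0\ge\mu_1$ gives $\|\cdot\|_{L^2(\mu_1)}\le\|\cdot\|_{L^2(\mu_0)}$, minimizing over monic polynomials of degree $n$ immediately yields $\gamma_n(\mu_0)\le\gamma_n(\mu_1)$; for the reverse inequality I would test the $\mu_0$-minimum against the $\mu_1$-monic polynomial $p_n(\mu_1,\cdot)/\gamma_n(\mu_1)$, obtaining
\[
\frac{1}{\gamma_n(\mu_0)^2}\le\frac{1}{\gamma_n(\mu_1)^2}\bigl(1+\|p_n(\mu_1,\cdot)\|_{L^2(\mu_2)}^2\bigr),
\]
and then invoke the PS hypothesis \eqref{eq:main-gen-asu1}. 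For (ii) a direct Hilbert-space expansion suffices: the cross term $\langle p_n(\mu_1,\cdot),p_n(\mu_0,\cdot)\rangle_{\mu_0}$ equals $\gamma_n(\mu_1)/\gamma_n(\mu_0)$, because $p_n(\mu_1)$ differs from $(\gamma_n(\mu_1)/\gamma_n(\mu_0))p_n(\mu_0)$ by a polynomial of lower degree that is $\mu_0$-orthogonal to $p_n(\mu_0)$; combined with $\|p_n(\mu_1,\cdot)\|_{L^2(\mu_0)}^2 = 1 + \|p_n(\mu_1,\cdot)\|_{L^2(\mu_2)}^2 \to 1$, the full squared norm collapses to $1 - 2 + 1 = 0$.

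For (iii) the natural starting point is Fej\'er's classical theorem: the zeros of both $p_n(\mu_0,\cdot)$ and $p_n(\mu_1,\cdot)$ lie in $\textup{Co}(S_0)=\textup{Co}(S_1)$ (using Proposition~\ref{prop:Co}), so $f_n(z):=p_n(\mu_1,z)/p_n(\mu_0,z)$ is analytic throughout $\overline{\mathbb{C}}\setminus\textup{Co}(S_0)$ with $f_n(\infty)=\gamma_n(\mu_1)/\gamma_n(\mu_0)\to 1$ by (i). I would decompose $p_n(\mu_0,\cdot)=c_n p_n(\mu_1,\cdot)+r_n$ with $c_n=\gamma_n(\mu_0)/\gamma_n(\mu_1)$ and $\deg r_n\le n-1$, and, using orthogonality and $d\mu_1=d\mu_0-d\mu_2$, establish the key identity
\[
\|r_n\|_{L^2(\mu_1)}^2+\|p_n(\mu_0,\cdot)\|_{L^2(\mu_2)}^2=1-c_n^2,
\]
forcing both summands to vanish. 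Writing $f_n=1/(c_n+r_n/p_n(\mu_1,\cdot))$, matters reduce to showing $r_n(z)/p_n(\mu_1,z)\to 0$ uniformly on compact subsets of $\overline{\mathbb{C}}\setminus\textup{Co}(S_0)$. For this I would combine the Christoffel bound $|r_n(z)|^2\le \|r_n\|_{L^2(\mu_1)}^2\,K_{n-1}(\mu_1,z,z)$ with the fact that outside the convex hull of its zeros, $|p_k(\mu_1,z)|$ grows geometrically in $k$, so that $K_{n-1}(\mu_1,z,z)/|p_n(\mu_1,z)|^2$ stays bounded there; this uses the regularity of $\mu_1$ derived from the capacity equality $\mathrm{cap}(S_0)=\mathrm{cap}(S_1)$ of Proposition~\ref{prop:Co}. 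The extension to $z\in\Omega$ when $p_n(\mu_0,\cdot)$ has no zeros in $\textup{Co}(S_0)\cap\Omega$ for large $n$ is the identical rational-function argument on the larger domain where $f_n$ remains analytic.

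For (iv) the natural test polynomial is the $\mu_1$-extremal $P_n^*(w):=K_n(\mu_1,w,z)/K_n(\mu_1,z,z)$, which satisfies $P_n^*(z)=1$ and $\|P_n^*\|_{L^2(\mu_1)}^2=\lambda_n(\mu_1,z)$. Using it as a test polynomial for $\lambda_n(\mu_0,z)$ and splitting $d\mu_0=d\mu_1+d\mu_2$ yields
\[
1\le \frac{\lambda_n(\mu_0,z)}{\lambda_n(\mu_1,z)}\le 1+\frac{\|P_n^*\|_{L^2(\mu_2)}^2}{\lambda_n(\mu_1,z)},
\]
so the whole task is to show the error term tends to $0$ uniformly on compacts of $\Omega$. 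Expanding $K_n(\mu_1,\cdot,z)$ in the $\mu_1$-orthonormal basis, the Gram matrix $M_{kj}=\langle p_k(\mu_1),p_j(\mu_1)\rangle_{\mu_2}$ satisfies $\|M\|\le\sum_{k}\|p_k(\mu_1,\cdot)\|_{L^2(\mu_2)}^2$, finite by \eqref{eq:main-gen-asu2}. Splitting the index set at a cutoff $N$ large enough that $\sum_{k>N}\|p_k(\mu_1,\cdot)\|_{L^2(\mu_2)}^2<\epsilon$ gives
\[
\|P_n^*\|_{L^2(\mu_2)}^2/\lambda_n(\mu_1,z)\le C_N\,\lambda_n(\mu_1,z)+\epsilon;
\]
letting $n\to\infty$ (using $\lambda_n(\mu_1,z)\to 0$ on compacts of $\Omega$) and then $\epsilon\to 0$ closes the argument. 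The principal obstacle in the whole programme is precisely this last ingredient, the uniform vanishing of $\lambda_n(\mu_1,z)$, equivalently the uniform divergence $K_n(\mu_1,z,z)\to\infty$ on compacts of $\Omega$: this is a quantitative regularity statement that does not follow immediately from \eqref{eq:main-gen-asu1}, and extracting it requires combining the capacity identity of Proposition~\ref{prop:Co} with the decay of $\|p_n(\mu_1,\cdot)\|_{L^2(\mu_2)}$ via classical potential-theoretic arguments.
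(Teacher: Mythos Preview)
Your arguments for (i) and (ii) are correct and coincide with the paper's: both follow from the extremal property and the identity $\langle p_n(\mu_1,\cdot),p_n(\mu_0,\cdot)\rangle_{\mu_0}=\gamma_n(\mu_1)/\gamma_n(\mu_0)$, exactly as in Lemma~\ref{lem:main1} (see \eqref{eq:lem-main1a}--\eqref{eq:<pn*,pn>} and \eqref{eq:lem-main2}).

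For (iii) there is a genuine gap. You assert that $\mu_1\in\textbf{Reg}$ follows from the capacity equality $\capy(S_0)=\capy(S_1)$ of Proposition~\ref{prop:Co}; this is false. That equality is purely a statement about the \emph{supports} and says nothing about $\gamma_n(\mu_1)^{1/n}\to 1/\capy(S_1)$. Without regularity, your claim that $K_{n-1}(\mu_1,z,z)/|p_n(\mu_1,z)|^2$ stays bounded outside $\textup{Co}(S_1)$ is unproved, and the argument stalls. The paper avoids this entirely: Lemma~\ref{lem:main1} gives the explicit pointwise bound \eqref{eq:ratioout},
\[
\Bigl|\frac{p_n(\mu_0,z)}{p_n(\mu_1,z)}-1\Bigr|\le\sqrt{2\beta_n}\,\Bigl[1+\frac{\diam(S_1)}{\dist(z,\textup{Co}(S_1))}\Bigr]^{2},
\]
obtained by the elementary technique of \cite[Theorem~2.1, (b)$\Rightarrow$(d)]{SSST}, which needs no regularity hypothesis on $\mu_1$ whatsoever. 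The extension to $\Omega$ is then, as you say, a normal-family argument.

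For the Christoffel-function part your test-polynomial strategy (the full reproducing kernel $K_n(\mu_1,\cdot,z)/K_n(\mu_1,z,z)$) is a valid variant of the paper's proof, which instead uses the \emph{truncated} kernel $\sum_{j=m}^n\overline{p_j(\mu_1,z)}\,p_j(\mu_1,\cdot)$ (Lemma~\ref{lem:ggs}); both reduce to the same tail/head splitting. However, you misdiagnose the ``principal obstacle''. The uniform vanishing $\lambda_n(\mu_1,z)\to 0$ on compacts of $\Omega$ is \emph{not} a regularity statement and does not require Proposition~\ref{prop:Co} or the PS hypothesis at all: it holds for \emph{every} compactly supported $\mu_1$ and follows in two lines from Runge's theorem (Proposition~\ref{lem:diagkernelout}). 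Simply approximate, on $S_1$ and on a small disk around the given point of $\Omega$, the function that is $0$ on the former and $1$ on the latter; the resulting polynomial is an admissible competitor in \eqref{eq:Chrfun-def}.
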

The appearance of $p_n(\mu_0,z)$ in the denominator of assertion (iii) causes no difficulties, since by
a classical result of Fej\'{e}r \emph{all the zeros of the orthonormal polynomials stay within
the convex hull of the support of the measure of orthogonality} see, e.g., \cite{Sa90}.

%\noindent
%\texttt{ANY COUNTER-EXAMPLES IF $\capy(S_1)=0$ THEN (\ref{eq:main-gen-asu2}) DOES NOT HOLD?}

Theorem~\ref{thm:main-gen} was motivated by the following example.
\begin{example}\label{Ex1}
Consider the case $G=G_1\cup G_2$, where $G_1$ denotes the canonical pentagon with corners at
the five roots of unity and $G_2$ denotes the disk with center at $3.5$ and radius $2/3$ and let $B$ be
the closed disk inside the pentagon with center at $1/2$ and radius $1/4$.
Then, take $\mu_1=A|_{G\setminus B}=A|_G-A|_B$ and $\mu_2=2A|_B$, so that $\mu_0=A|_G+A|_B$ is a PS perturbation of $\mu_1$, by an application of Lemma~2.2 in \cite{SSST}.
\end{example}

\begin{figure}[t]

\begin{minipage}{\columnwidth}
\begin{center}
\fbox{\includegraphics[scale=0.50]{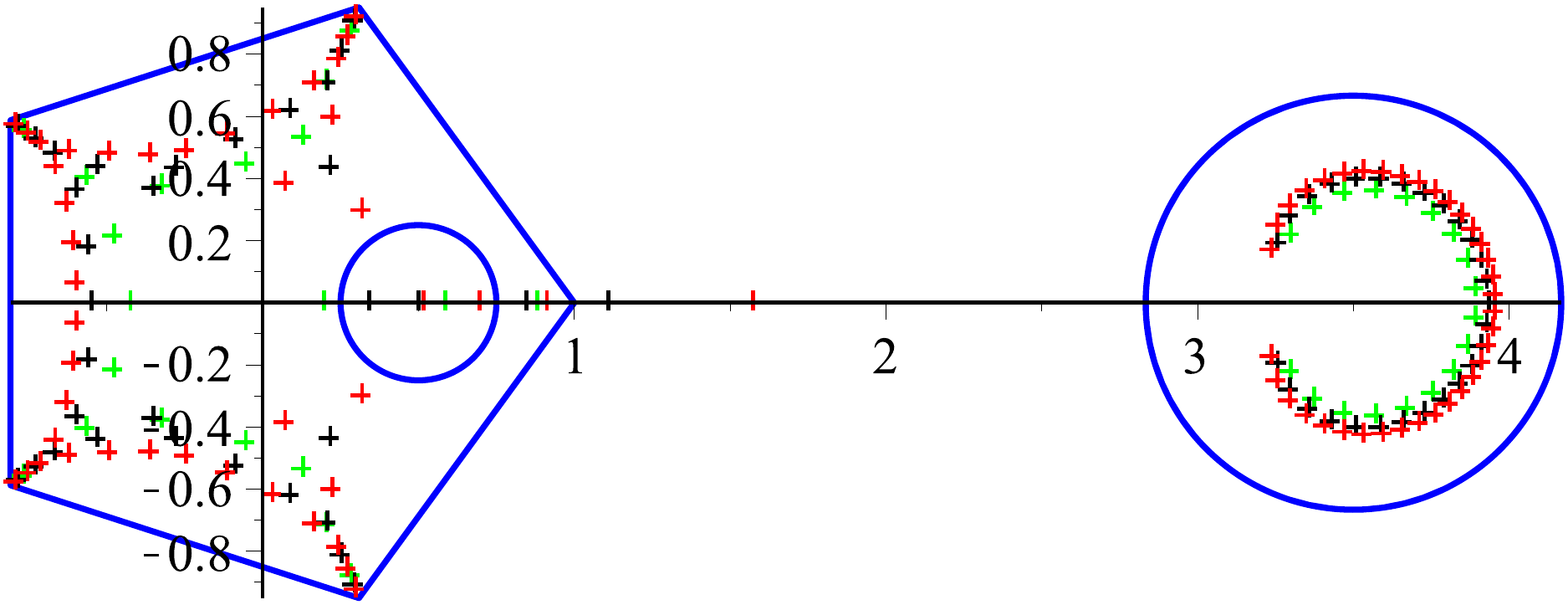}}
\end{center}
\end{minipage}
\begin{minipage}{\columnwidth}
\begin{center}
\fbox{\includegraphics[scale=0.50]{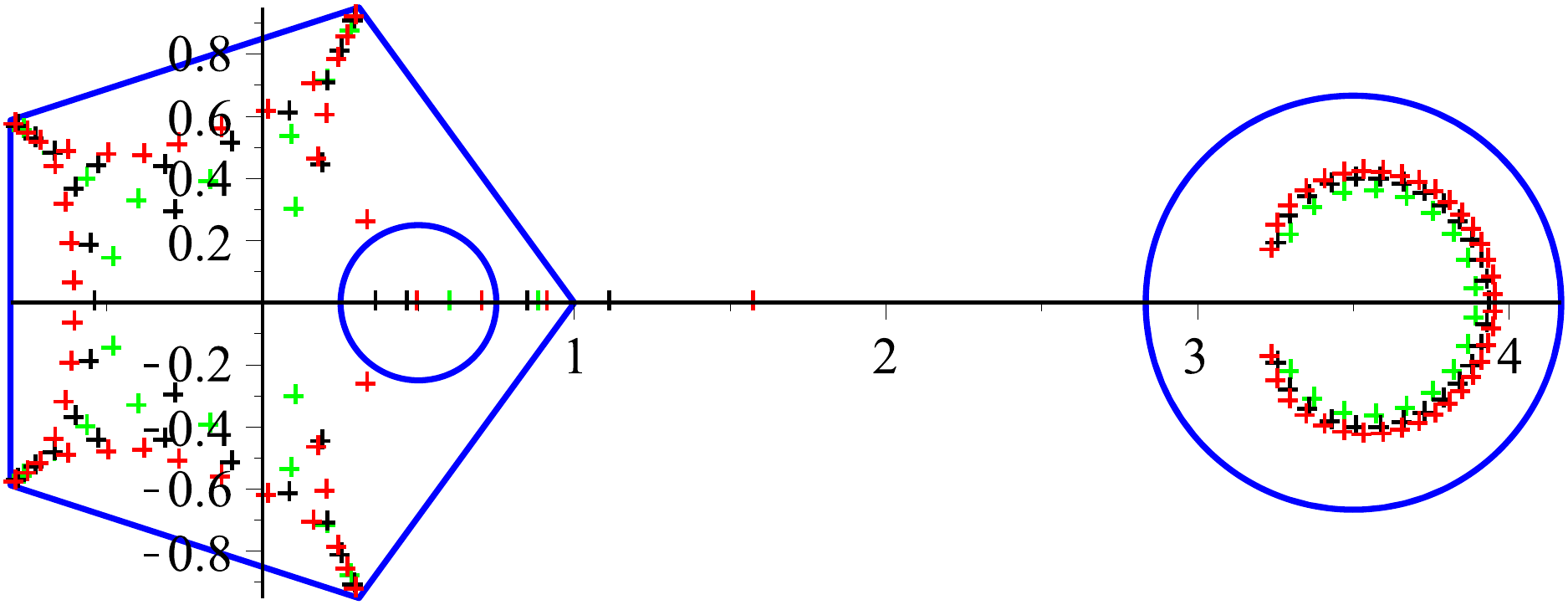}}
\end{center}
\end{minipage}
\caption{Plots of the zeros of $p_n(\mu_0,z)$, for $n=40,60$ and $80$ (above)
and $p_n(\mu_1,z)$, for $n=40,60$ and $80$ (below), related to Example~\ref{Ex1}.}
\label{fig1}
\end{figure}

In Figure~\ref{fig1} we plot the zeros of $p_n(\mu_0,z)$  and of $p_n(\mu_1,z)$, for $n=40,60$
and $80$. The close resemblance of the zeros in the two plots suggested that the behaviour of the associated orthonormal
polynomial sequence outside $G$ should be essentially the same, which is indeed what
Theorem~\ref{thm:main-gen} asserts. On the other hand, we wish to emphasize that, in general, the zeros of orthogonal polynomials
for PS perturbations of a measure $\mu_1$ need not have the same limit behaviour as the zeros of the orthogonal polynomials
generated by $\mu_1$; see Proposition~\ref{thm:Ex3} and Figure ~\ref{fig4} below. However,  the balayage of their limit measures
must be the same; see Theorem~\ref{th:ST4.7}.

The constructions of the orthonormal polynomials utilized for the plots in this paper were performed
by applying the Arnoldi variant of the Gram-Schmidt method; see e.g. \cite[Section 7.4]{St-CA13}.
All the computations  were carried out on a MacBook Pro using Maple.

Essential to the proof of Theorem~\ref{thm:main-gen}
is the following simple lemma, which is of independent interest.
\begin{lemma}\label{lem:main1}
Let $\mu_0$ and $\mu_1$ be two measures having compact and infinite support  in $\mathbb{C}$, with
$\mu_0\ge\mu_1$. Then for all $n\in\mathbb{N}\cup\{0\}$,
\begin{equation}\label{eq:lem-main1a}
\frac{\gamma_n(\mu_1)}{\gamma_n(\mu_0)}=1+\beta_n,
\end{equation}
where $\beta_n$ is non-negative and satisfies
\begin{equation}\label{eq:lem-main1b}
\frac{1}{\left(1-\|p_n(\mu_0,\cdot)\|^2_{L^2(\mu_2)}\right)^{1/2}}-1\le
\beta_n\le\left(1+\|p_n(\mu_1,\cdot)\|^2_{L^2(\mu_2)}\right)^{1/2}-1.
\end{equation}
Furthermore,
\begin{equation}\label{eq:compare}
\|p_n(\mu_0,\cdot)\|_{L^2(\mu_2)}\le\|p_n(\mu_1,\cdot)\|_{L^2(\mu_2)}
\end{equation}
and
\begin{equation}\label{eq:corbetan-1}
\|p_n(\mu_0,\cdot)-p_n(\mu_1,\cdot)\|^2_{L^2(\mu_1)}\le 2\beta_n.
\end{equation}
Finally, for any $z\in\overline{\mathbb{C}}\setminus{\rm Co}(S_1)$,
\begin{equation}\label{eq:ratioout}
\left|\frac{p_n(\mu_0,z)}{p_n(\mu_1,z)}-1\right|\le
\sqrt{2\beta_n}\left[1+\frac{\diam(S_1)}{\dist(z,\textup{Co}(S_1))}\right]^2.
\end{equation}
\end{lemma}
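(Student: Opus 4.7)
My plan is to dispatch parts (i)-(iv) with a short sequence of variational and orthogonality computations, and to handle (v) as a pointwise consequence of (iv) combined with a geometric bound on the Christoffel kernel outside $\text{Co}(S_1)$.

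For (\ref{eq:lem-main1a}) and the two-sided estimate (\ref{eq:lem-main1b}), I would use the extremal characterization (\ref{eq:minimal1}) in both directions. Testing the $L^2(\mu_0)$ extremal problem with the $\mu_1$-monic polynomial $p_n(\mu_1,\cdot)/\gamma_n(\mu_1)$ and exploiting $\mu_0 = \mu_1 + \mu_2$ yields $\gamma_n(\mu_0)^{-2} \le \gamma_n(\mu_1)^{-2}\bigl(1 + \|p_n(\mu_1,\cdot)\|^2_{L^2(\mu_2)}\bigr)$, which gives the upper bound in (\ref{eq:lem-main1b}) after taking square roots. Symmetrically, using $p_n(\mu_0,\cdot)/\gamma_n(\mu_0)$ as an admissible competitor in the $L^2(\mu_1)$ problem gives $\gamma_n(\mu_1)^{-2} \le \gamma_n(\mu_0)^{-2}\bigl(1 - \|p_n(\mu_0,\cdot)\|^2_{L^2(\mu_2)}\bigr)$, delivering both $\beta_n \ge 0$ and the lower bound in (\ref{eq:lem-main1b}).

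The comparison (\ref{eq:compare}) then follows by combining these two bounds on $(1+\beta_n)^2$: with $x = \|p_n(\mu_0,\cdot)\|^2_{L^2(\mu_2)}$ and $y = \|p_n(\mu_1,\cdot)\|^2_{L^2(\mu_2)}$ they yield $(1+y)(1-x) \ge 1$, i.e.\ $y \ge x(1+y) \ge x$. For (\ref{eq:corbetan-1}), I would expand $p_n(\mu_0,\cdot) = \sum_{k=0}^n a_k p_k(\mu_1,\cdot)$ in the $\mu_1$-orthonormal basis; matching leading coefficients identifies $a_n = \gamma_n(\mu_0)/\gamma_n(\mu_1) = 1/(1+\beta_n)$, whence $\langle p_n(\mu_0,\cdot), p_n(\mu_1,\cdot)\rangle_{\mu_1} = 1/(1+\beta_n)$. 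Combined with $\|p_n(\mu_0,\cdot)\|^2_{L^2(\mu_1)} \le \|p_n(\mu_0,\cdot)\|^2_{L^2(\mu_0)} = 1$ (since $\mu_1 \le \mu_0$), expanding the squared norm produces $\|p_n(\mu_0,\cdot) - p_n(\mu_1,\cdot)\|^2_{L^2(\mu_1)} \le 2 - 2/(1+\beta_n) \le 2\beta_n$.

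For (\ref{eq:ratioout}), I would apply Cauchy--Schwarz via the reproducing kernel of $\mathbb{P}_n$ in $L^2(\mu_1)$ to the polynomial $p_n(\mu_0,\cdot) - p_n(\mu_1,\cdot) \in \mathbb{P}_n$, obtaining $|p_n(\mu_0, z) - p_n(\mu_1, z)|^2 \le \|p_n(\mu_0,\cdot) - p_n(\mu_1,\cdot)\|^2_{L^2(\mu_1)} \cdot K_n(\mu_1, z, z) \le 2\beta_n\, K_n(\mu_1, z, z)$ by (\ref{eq:corbetan-1}). Dividing through by $|p_n(\mu_1, z)|^2$ reduces the claim to the geometric inequality $K_n(\mu_1, z, z) \le (1+d/\rho)^4 |p_n(\mu_1, z)|^2$ on $\overline{\mathbb{C}} \setminus \text{Co}(S_1)$, where $d := \diam(S_1)$ and $\rho := \dist(z, \text{Co}(S_1))$. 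By Fej\'{e}r's theorem all zeros of each $p_k(\mu_1,\cdot)$ lie in $\text{Co}(S_1)$, giving the factorization bounds $|p_n(\mu_1, z)| \ge \gamma_n(\mu_1)\rho^n$ and $|p_k(\mu_1, z)| \le \gamma_k(\mu_1)(\rho+d)^k$ for each $k \le n$. The main obstacle is precisely this final geometric step: organising these Fej\'{e}r-based factorisation bounds, together with the growth relations among the $\gamma_k(\mu_1)$, so that the sum $\sum_{k=0}^n |p_k(\mu_1, z)|^2 / |p_n(\mu_1, z)|^2$ collapses to $(1+d/\rho)^4$ uniformly in $n$.
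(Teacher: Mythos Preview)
Your arguments for (\ref{eq:lem-main1a})--(\ref{eq:corbetan-1}) are correct and essentially coincide with the paper's. The paper phrases (\ref{eq:lem-main1b}) via Bessel's inequality in $L^2(\mu_0)$ and $L^2(\mu_1)$, namely $\|p_n(\mu_1,\cdot)\|^2_{L^2(\mu_0)}\ge (\gamma_n(\mu_1)/\gamma_n(\mu_0))^2$ and $\|p_n(\mu_0,\cdot)\|^2_{L^2(\mu_1)}\ge (\gamma_n(\mu_0)/\gamma_n(\mu_1))^2$, which is exactly equivalent to your direct use of the extremal property (\ref{eq:minimal1}). For (\ref{eq:corbetan-1}) the paper records the identity $\|p_n(\mu_0,\cdot)-p_n(\mu_1,\cdot)\|^2_{L^2(\mu_1)}=2\beta_n/(1+\beta_n)-\|p_n(\mu_0,\cdot)\|^2_{L^2(\mu_2)}$ and drops the last term; your expansion with $\|p_n(\mu_0,\cdot)\|^2_{L^2(\mu_1)}\le 1$ is the same computation.

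For (\ref{eq:ratioout}) your reduction via Cauchy--Schwarz to the kernel inequality
\[
K_n(\mu_1,z,z)\le \Bigl(1+\frac{d}{\rho}\Bigr)^{4}|p_n(\mu_1,z)|^2,\qquad z\notin\textup{Co}(S_1),
\]
is the right target, but the mechanism you propose for this last step cannot succeed. The crude Fej\'er bounds $|p_k(\mu_1,z)|\le\gamma_k(\mu_1)(\rho+d)^k$ and $|p_n(\mu_1,z)|\ge\gamma_n(\mu_1)\rho^n$, combined with the only universal relation among the leading coefficients, $\gamma_k(\mu_1)/\gamma_n(\mu_1)\le d^{\,n-k}$ (obtained by testing $(w-a)^{n-k}p_k(\mu_1,w)/\gamma_k(\mu_1)$, $a\in S_1$, in the degree-$n$ monic problem), give
\[
\sum_{k=0}^{n}\frac{|p_k(\mu_1,z)|^2}{|p_n(\mu_1,z)|^2}\le \frac{1}{\rho^{2n}}\sum_{k=0}^{n} d^{\,2(n-k)}(\rho+d)^{2k},
\]
and the right-hand side is of order $((\rho+d)/\rho)^{2n}$, so it blows up with $n$ rather than collapsing to $(1+d/\rho)^4$. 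Decoupling numerator and denominator through separate zero-location envelopes loses too much; one needs an argument that ties $p_k$ to $p_n$ more directly. The paper does not spell this out either: it simply invokes ``working as in obtaining (d) from (b) in the proof of Theorem~2.1 in \cite{SSST}''. To close your gap you should import that kernel estimate from \cite{SSST} rather than pursue the termwise Fej\'er factorisation.
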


If $\displaystyle{\lim_{n\to\infty}\gamma_n(\mu_1)^{1/n}=1/\capy(S_1)}$, then $\mu_1$ belongs to the important
class of measures \textbf{Reg} investigated in \cite{StTo}. Furthermore, if in a neighbourhood of infinity
\begin{equation}\label{eq:def-Rat}
\lim_{n\to\infty}\frac{p_n(\mu_1,z)}{p_{n+1}(\mu_1,z)}=f(z),
\end{equation}
for some analytic function $f(z)$, then we say $\mu_1$ belongs to the class
of measures \textbf{Ratio}($f$). The following result shows that both classes of measures are invariant under
PS perturbations.
\begin{proposition}\label{cor:inva}
Let $\mu_1$ be in the class \textbf{Reg}, respectively \textbf{Ratio}($f$). If $\mu_0$ is a PS perturbation of
$\mu_1$, then $\mu_0$ is in the class \textbf{Reg}, respectively \textbf{Ratio($f$)}.
\end{proposition}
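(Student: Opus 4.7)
The plan is to reduce both statements to the conclusions already delivered by Theorem~\ref{thm:main-gen}, using Proposition~\ref{prop:Co} to identify the capacities of $S_0$ and $S_1$.

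For the \textbf{Reg} assertion, I would start from Theorem~\ref{thm:main-gen}(i), which gives $\gamma_n(\mu_1)/\gamma_n(\mu_0)\to 1$. Taking $n$-th roots, the ratio $\bigl(\gamma_n(\mu_1)/\gamma_n(\mu_0)\bigr)^{1/n}$ still tends to $1$, so $\gamma_n(\mu_0)^{1/n}$ has the same limit as $\gamma_n(\mu_1)^{1/n}$. By hypothesis this common limit equals $1/\capy(S_1)$, and Proposition~\ref{prop:Co} identifies $\capy(S_1)=\capy(S_0)$. Hence $\gamma_n(\mu_0)^{1/n}\to 1/\capy(S_0)$, i.e.\ $\mu_0\in\textbf{Reg}$.

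For the \textbf{Ratio}($f$) assertion, the strategy is to insert a telescoping identity: in a neighborhood of infinity,
\begin{equation*}
\frac{p_n(\mu_0,z)}{p_{n+1}(\mu_0,z)}
=\frac{p_n(\mu_0,z)}{p_n(\mu_1,z)}\cdot\frac{p_n(\mu_1,z)}{p_{n+1}(\mu_1,z)}\cdot\frac{p_{n+1}(\mu_1,z)}{p_{n+1}(\mu_0,z)}.
\end{equation*}
A neighborhood of $\infty$ lies in $\overline{\mathbb{C}}\setminus\textup{Co}(S_0)$, and by Fej\'er's theorem on zeros of orthogonal polynomials (quoted after Theorem~\ref{thm:main-gen}) all zeros of $p_n(\mu_0,\cdot)$ lie in $\textup{Co}(S_0)$, so the denominators are nonzero there. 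Theorem~\ref{thm:main-gen}(iii) then forces the first and third factors to tend to $1$ uniformly on compact subsets of such a neighborhood, while the middle factor tends to $f(z)$ by assumption. Multiplying the three limits gives $\lim_{n\to\infty}p_n(\mu_0,z)/p_{n+1}(\mu_0,z)=f(z)$, which is the \textbf{Ratio}($f$) condition for $\mu_0$.

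The main obstacle is minor, consisting only of two bookkeeping points: one must invoke Proposition~\ref{prop:Co} (rather than a tautology) to pass from $\capy(S_1)$ to $\capy(S_0)$ in the \textbf{Reg} case, and one must be sure that the region in which Theorem~\ref{thm:main-gen}(iii) applies (the exterior of $\textup{Co}(S_0)$) contains a neighborhood of $\infty$ so that the definition of \textbf{Ratio}($f$) is actually tested where the relative asymptotics are guaranteed. Both points are immediate, so no genuine difficulty arises beyond correctly quoting the previous results.
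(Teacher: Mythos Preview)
Your proposal is correct and follows essentially the same route as the paper: the \textbf{Reg} case is deduced from Theorem~\ref{thm:main-gen}(i) together with the capacity equality of Proposition~\ref{prop:Co}, and the \textbf{Ratio}($f$) case from Theorem~\ref{thm:main-gen}(iii) via the same telescoping identity you wrote down, with Fej\'er's theorem justifying the nonvanishing of the denominators outside $\textup{Co}(S_0)$.
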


We refer to \cite{BLS}, \cite{KhSt}, \cite{PuSt}, \cite{SaSt2012}, \cite{Si12} and \cite{Si14}, for a list of
recent applications of measures in the class \textbf{Ratio}($f$), where Proposition~\ref{cor:inva}
is expected to have an impact; namely results concerning measures in the class
\textbf{Ratio}($f$) should  be easily extended to hold for PS perturbations of these measures.

As an illustration, we consider the infinite upper Hessenberg matrix
$M_{\mu_0}$ associated with the orthonormal sequence $\{p_n(\mu_0,z)\}_{n=0}^\infty$;
that is, $M_{\mu_0}:=[b_{k,j}]_{k,j=1}^\infty$, where
$b_{k,j}=\langle zp_j(\mu_0,\cdot),p_k(\mu_0,\cdot)\rangle_{\mu_0}$; see, e.g., \cite{SaSt2012} and
\cite{Si14}.
Then, the following result is an easy consequence of Theorem~\ref{thm:main-gen}, Proposition~\ref{cor:inva} and
\cite[Corollary~1.4]{Si14}.

\begin{proposition}\label{cor:sima}
Assume that $\mu_1\in$ \textbf{Ratio($f$)} satisfies
$$
\liminf_{n\to\infty}\frac{\gamma_n(\mu_1)}{\gamma_{n+1}(\mu_1)}>0.
$$
If $\mu_0$ is a PS perturbation of $\mu_1$, then the matrix $M_{\mu_0}$ is
weakly asymptotically Toeplitz; i.e. for all $k\ge -1$,  $\lim_{n\to\infty}b_{n-k,n}$ exists.
\end{proposition}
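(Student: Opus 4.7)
The plan is to combine the three cited results in the natural order: upgrade the hypotheses from $\mu_1$ to $\mu_0$, and then apply \cite[Corollary~1.4]{Si14} directly to $\mu_0$. First, since $\mu_0$ is a PS perturbation of $\mu_1$ and $\mu_1\in\textbf{Ratio}(f)$, Proposition~\ref{cor:inva} immediately yields $\mu_0\in\textbf{Ratio}(f)$, so
$$
\lim_{n\to\infty}\frac{p_n(\mu_0,z)}{p_{n+1}(\mu_0,z)}=f(z)
$$
in a neighborhood of infinity. Note that the convex hull condition in Theorem~\ref{thm:main-gen}(iii) presents no issue here, since a neighborhood of infinity is disjoint from $\mathrm{Co}(S_0)=\mathrm{Co}(S_1)$.

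Next I would verify that the $\liminf$ condition also transfers from $\mu_1$ to $\mu_0$. This is where Theorem~\ref{thm:main-gen}(i) enters: factoring
$$
\frac{\gamma_n(\mu_0)}{\gamma_{n+1}(\mu_0)}
=\frac{\gamma_n(\mu_0)}{\gamma_n(\mu_1)}\cdot\frac{\gamma_n(\mu_1)}{\gamma_{n+1}(\mu_1)}\cdot\frac{\gamma_{n+1}(\mu_1)}{\gamma_{n+1}(\mu_0)},
$$
the outer two factors tend to $1$ by (i) of Theorem~\ref{thm:main-gen}, while the middle factor has a strictly positive $\liminf$ by hypothesis. Consequently
$$
\liminf_{n\to\infty}\frac{\gamma_n(\mu_0)}{\gamma_{n+1}(\mu_0)}
=\liminf_{n\to\infty}\frac{\gamma_n(\mu_1)}{\gamma_{n+1}(\mu_1)}>0.
$$

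With both hypotheses of \cite[Corollary~1.4]{Si14} now verified for $\mu_0$ itself, that corollary applies directly and gives exactly the conclusion that $M_{\mu_0}=[b_{k,j}]$ is weakly asymptotically Toeplitz, i.e.\ $\lim_{n\to\infty}b_{n-k,n}$ exists for every $k\ge -1$. The only step requiring any care is the transfer of the $\liminf$ condition, and this is handled cleanly by Theorem~\ref{thm:main-gen}(i); the \textbf{Ratio}($f$) invariance and the Hessenberg/Toeplitz conclusion are then packaged, respectively, by Proposition~\ref{cor:inva} and by Simon's corollary, with no further computation required.
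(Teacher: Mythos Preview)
Your proof is correct and follows precisely the approach the paper indicates: the paper merely states that the result is an easy consequence of Theorem~\ref{thm:main-gen}, Proposition~\ref{cor:inva}, and \cite[Corollary~1.4]{Si14}, and you have supplied exactly the natural details---transferring the \textbf{Ratio}($f$) property via Proposition~\ref{cor:inva}, transferring the $\liminf$ condition via the factorization and Theorem~\ref{thm:main-gen}(i), and then invoking \cite[Corollary~1.4]{Si14} for $\mu_0$.
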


In the following proposition we give an example where sharp rates of convergence are obtained
for the assertions (i), (ii) and (iii) of Theorem ~\ref{thm:main-gen}.

\begin{proposition}\label{thm:disk}
Let $\mathcal{K}$ be a compact subset of the unit disk $\mathbb{D}$ and let $r:=\max\{|z|:z\in\mathcal{K}\}$.
If $\mu_1:=A|_{\mathbb{D}\setminus\mathcal{K}}$ and $\mu_0:=A|_{\mathbb{D}}$, then the following hold:
\begin{itemize}
\item[(i)]
$$
\frac{\gamma_n(\mu_1)}{\gamma_n(\mu_0)}=\frac{\gamma_n(\mu_1)}{\sqrt{\frac{n+1}{\pi}}}=
1+O\left(r^{2n}\right),
$$
and the order $O\left(r^{2n}\right)$ is sharp;
\item[(ii)]
$$
\|p_n(\mu_1,\cdot)-p_n(\mu_0,\cdot)\|_{L^2(\mathbb{D})}=
\left\|p_n(\mu_1,\cdot)-{\sqrt{\frac{n+1}{\pi}}}z^n\right\|_{L^2(\mathbb{D})}
=O\left(r^{n}\right);
$$
\item[(iii)]
$$
\frac{p_n(\mu_1,z)}{p_n(\mu_0,z)}=
\frac{p_n(\mu_1,z)}{\sqrt{\frac{n+1}{\pi}}z^n}
=1+O\left(r^n\right),\quad |z|>1;
$$
\item[(iv)]
$$
\max_{z\in\overline{\mathbb{D}}}\left|p_n(\mu_1,z)-
\sqrt{\frac{n+1}{\pi}}z^n\right|=O\left(nr^n\right).
$$
\end{itemize}
In the above, $O$ depends on $r$ only.
\end{proposition}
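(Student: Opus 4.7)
The plan is to apply Lemma~\ref{lem:main1} to the decomposition $\mu_0=\mu_1+\mu_2$ with $\mu_2=A|_\mathcal{K}$, and exploit the explicit formula $p_n(\mu_0,z)=\sqrt{(n+1)/\pi}\,z^n$ for the Bergman polynomials of the disk. Everything reduces to showing that the two quantities $a_n:=\|p_n(\mu_0,\cdot)\|^2_{L^2(\mu_2)}$ and $b_n:=\|p_n(\mu_1,\cdot)\|^2_{L^2(\mu_2)}$ that drive the lemma are both $O(r^{2n})$; after that, items (i)--(iv) are short mechanical consequences.

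For $a_n$ the bound is immediate: since $\mathcal{K}\subset r\overline{\mathbb{D}}$, a polar-coordinate computation gives $a_n\le\frac{n+1}{\pi}\int_{r\overline{\mathbb{D}}}|z|^{2n}\,dA = r^{2n+2}$. The bound on $b_n$ is the heart of the argument and I expect it to be the main obstacle. Writing $p_n(\mu_1,z)=\sum_{k=0}^n d_k z^k$, rotational symmetry on disks centered at the origin yields
\[
\int_{r\overline{\mathbb{D}}}|p_n(\mu_1,z)|^2\,dA=\sum_{k=0}^n|d_k|^2\,\frac{\pi r^{2k+2}}{k+1}.
\]
The $k=n$ term is exactly $c_n^2 r^{2n+2}$ where $c_n:=\gamma_n(\mu_1)/\gamma_n(\mu_0)\ge 1$; for the lower terms I would use $r^{2k+2}\le r^2$ together with the identity $\sum_{k=0}^n|d_k|^2\pi/(k+1)=\|p_n(\mu_1,\cdot)\|^2_{L^2(\mathbb{D})}=1+b_n$. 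This produces a self-referential inequality for $b_n$, and exploiting $c_n\ge 1$ to absorb the resulting sign then rearranging gives the clean estimate $b_n\le r^{2n+2}/(1-r^2)$. The subtlety is that the naive inequality $b_n\le r^2(1+b_n)$ only yields $b_n=O(1)$; the gain comes from isolating the leading-coefficient contribution, which already carries the full $r^{2n+2}$ decay, while using $c_n\ge 1$ to dominate the cross-term.

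With $a_n,b_n=O(r^{2n})$ established, assertion (i) follows from (\ref{eq:lem-main1b}): the upper bound $\beta_n\le\sqrt{1+b_n}-1\le b_n/2$ gives the stated order, while sharpness comes from the lower bound $\beta_n\ge(1-a_n)^{-1/2}-1\ge a_n/2$, which is of order $r^{2n}$ whenever $\mathcal{K}$ contains a small disk touching the circle $|z|=r$. For (ii), expanding $p_n(\mu_1)=\sum c_k p_k(\mu_0)$ and applying Parseval in $L^2(\mathbb{D})$ produces the identity
\[
\|p_n(\mu_1,\cdot)-p_n(\mu_0,\cdot)\|^2_{L^2(\mathbb{D})}=\beta_n^2+\sum_{k<n}|c_k|^2=b_n-2\beta_n=O(r^{2n}),
\]
so the norm is $O(r^n)$. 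For (iii), I would invoke (\ref{eq:ratioout}) with $\mathrm{Co}(S_1)=\overline{\mathbb{D}}$ and $\diam(S_1)\le 2$: for $|z|>1$ this yields $|p_n(\mu_0,z)/p_n(\mu_1,z)-1|=O(\sqrt{\beta_n})=O(r^n)$, and inverting the ratio produces (iii). Finally for (iv), on $\overline{\mathbb{D}}$ the estimate $|p_n(\mu_1,z)-p_n(\mu_0,z)|\le \beta_n\sqrt{(n+1)/\pi}+\sum_{k<n}|c_k|\sqrt{(k+1)/\pi}$ combined with Cauchy--Schwarz (using $\sum_{k<n}(k+1)/\pi=O(n^2)$ and $\sum_{k<n}|c_k|^2=b_n-2\beta_n-\beta_n^2=O(r^{2n})$) delivers the $O(nr^n)$ bound.
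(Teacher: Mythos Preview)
Your argument is correct and self-contained; the bound $b_n\le r^{2n+2}/(1-r^2)$ via the self-referential inequality is clean, and your derivations of (ii)--(iv) from $a_n,b_n=O(r^{2n})$ are all valid. There is one small slip: for sharpness in (i) you claim that $a_n$ is of order $r^{2n}$ whenever $\mathcal{K}$ contains a small disk internally tangent to $|z|=r$. In fact a Laplace-type estimate shows that for such a disk $\int_{\mathcal{K}}|z|^{2n}\,dA\asymp r^{2n}n^{-3/2}$, so $a_n\asymp r^{2n}n^{-1/2}=o(r^{2n})$. The easy fix is to take $\mathcal{K}=r\overline{\mathbb{D}}$, for which $a_n=r^{2n+2}$ exactly and hence $\beta_n\ge\tfrac12 r^{2n+2}(1+o(1))$.

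It is worth noting that the paper reaches (i) by a different and somewhat shorter route: instead of bounding $b_n$, it introduces the annulus $\widetilde{G}:=\mathbb{D}\setminus r\overline{\mathbb{D}}$, observes that its orthonormal polynomials are again monomials with explicit leading coefficient $\gamma_n(\widetilde{G})=\sqrt{(n+1)/\pi}\,(1-r^{2n+2})^{-1/2}$, and sandwiches $\gamma_n(\mathbb{D})\le\gamma_n(\mu_1)\le\gamma_n(\widetilde{G})$ via the extremal property~(\ref{eq:minimal1}). This gives $\beta_n=O(r^{2n})$ with no computation of $b_n$ at all; part (ii) then follows from~(\ref{eq:corbetan-1}) and the equivalence of the norms $\|\cdot\|_{L^2(\mathbb{D})}$ and $\|\cdot\|_{L^2(\mathbb{D}\setminus\mathcal{K})}$. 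For (iv) the paper invokes the general polynomial inequality $\|P_n\|_{\overline{\mathbb{D}}}\le Cn\,\|P_n\|_{L^2(\mathbb{D})}$ rather than your explicit Cauchy--Schwarz on coefficients. Your approach has the virtue of staying entirely within the framework of Lemma~\ref{lem:main1} and producing the explicit constant $1/(1-r^2)$; the paper's sandwich avoids the self-referential step by exploiting a second comparison measure whose polynomials are known in closed form.
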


Next we state a proposition that is needed in establishing the last assertion in Theorem~\ref{thm:main-gen}.
It is a generalisation of assertion (2.10) in \cite{SSST} but we provide a shorter proof.
\begin{proposition}\label{lem:diagkernelout}
If $\mathcal{K}$ is a compact subset of $\Omega$, then
\begin{equation}\label{eq:diagkernelout}
\sum_{n=0}^\infty |p_n(\mu_1,z)|^2=\infty
\end{equation}
or, equivalently,
\begin{equation}\label{eq:chrfout}
\lim_{n\to\infty}\lambda_n(\mu_1,z)=0,
\end{equation}
uniformly for $z\in \mathcal{K}$.
\end{proposition}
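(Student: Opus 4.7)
The plan is to exploit the reproducing-kernel identity (\ref{eq:Chrfun-pro}) -- which immediately gives the equivalence of (\ref{eq:diagkernelout}) and (\ref{eq:chrfout}) -- and then to estimate $\lambda_n(\mu_1,z)$ from above by constructing an explicit admissible polynomial in the extremal problem (\ref{eq:Chrfun-def}). For any monic polynomial $Q$ of degree $n$ with $Q(z)\neq 0$, the choice $P=Q/Q(z)$ in (\ref{eq:Chrfun-def}) yields
\[
\lambda_n(\mu_1,z)\;\le\;\frac{\|Q\|_{L^2(\mu_1)}^2}{|Q(z)|^2}\;\le\;\frac{\mu_1(\mathbb{C})\,\|Q\|_{L^\infty(S_1)}^2}{|Q(z)|^2},
\]
so the task reduces to finding $Q$ for which this ratio is uniformly geometrically small on $\mathcal{K}$.

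I would take $Q=F_n$ to be the $n$th Fekete polynomial for $S_1$ (monic of degree $n$, with all its zeros on $S_1$). By classical logarithmic potential theory, the Fekete polynomials satisfy
\[
\lim_{n\to\infty}\|F_n\|_{L^\infty(S_1)}^{1/n}=\capy(S_1),\qquad
\lim_{n\to\infty}|F_n(z)|^{1/n}=\capy(S_1)\,e^{\grz},
\]
the second limit being locally uniform on $\Omega$. Since $\grz$ is positive and continuous on $\Omega\setminus\{\infty\}$, the number $\delta:=\min_{z\in\mathcal{K}}\grz$ is strictly positive, and combining the two displays gives
\[
\limsup_{n\to\infty}\lambda_n(\mu_1,z)^{1/n}\;\le\;e^{-2\delta}\;<\;1
\]
uniformly for $z\in\mathcal{K}$, which delivers the required geometric decay.

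The main subtlety to address is the degenerate case $\capy(S_1)=0$, where the two Fekete asymptotics above formally collapse. In that regime the argument actually strengthens: one still has $\|F_n\|_{L^\infty(S_1)}^{1/n}\to 0$ (the Chebyshev constant of $S_1$), while $|F_n(z)|\ge \dist(z,S_1)^n\ge \dist(\mathcal{K},S_1)^n>0$ uniformly on $\mathcal{K}$ because the zeros of $F_n$ all lie in $S_1$, so the ratio above continues to decay geometrically. Thus the essential technical input throughout is the classical Bernstein--Walsh asymptotic for Fekete polynomials, which supplies exactly the uniform control on compact subsets of $\Omega$ required for the conclusion.
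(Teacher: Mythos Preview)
Your proof is correct but follows a genuinely different route from the paper's. The paper argues via Runge's theorem: for fixed $\zeta\in\mathcal{K}$ one approximates the locally constant function that is $0$ on a neighbourhood of $\textup{Pc}(S_1)$ and $1$ on a small closed disk $\overline{D}(\zeta,\epsilon)\subset\Omega$ by polynomials $q_n$, plugs $q_n/q_n(z)$ into the extremal problem (\ref{eq:Chrfun-def}) to get $\lambda_n(\mu_1,z)\to 0$ uniformly on $\overline{D}(\zeta,\epsilon)$, and finishes by a finite covering of $\mathcal{K}$. Your approach instead invokes potential theory through the Fekete polynomials and the Green function, which requires more background machinery (weak-$*$ convergence of Fekete measures to equilibrium, locally uniform convergence of their potentials on $\Omega$) and forces you to treat the degenerate case $\capy(S_1)=0$ separately. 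In return you obtain a quantitatively stronger conclusion, namely the geometric rate $\limsup_n\lambda_n(\mu_1,z)^{1/n}\le e^{-2\delta}<1$ uniformly on $\mathcal{K}$, whereas the Runge argument yields only $\lambda_n(\mu_1,z)\to 0$ with no rate. The paper's proof is shorter and entirely elementary; yours is more informative and connects the statement to the Bernstein--Walsh circle of ideas that underlies much of the rest of the paper.
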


%We conclude the section by presenting an immediate application of Proposition~\ref{lem:diagkernelout}
%which is of independent interest.
%\begin{proposition}\label{cor:P2mu}
%Assume that $\capy(S_1)>0$. Then, for any $z_0\in\Omega$,  ${1/(z-z_0)}\in \mathcal{P}^2(\mu_1)$,
%the closure of the space of complex polynomials in $L^2(\mu_1)$.
%\end{proposition}

\section{Zeros of PS perturbations of Bergman polynomials}

Our goal in this section is to provide results concerning the asymptotic behaviour
of zeros of PS perturbations of  Bergman polynomials. (By the term \textit{Bergman polynomials} we mean
polynomials orthonormal with respect to the area measure on bounded regions.)
%illustrated in Figure~\ref{fig1}.

To do this it will be convenient to use the notations introduced in \cite{SSST} regarding
\emph{archipelaga with lakes}.
More precisely, let $G:=\bigcup_{j=1}^m G_j$ be a finite union of bounded Jordan domains
$G_j$, $j=1,\ldots,m$, in the complex plane $\mathbb{C}$, with pairwise disjoint closures, let
$\mathcal{K}$ be a compact subset of $G$ and
consider the set $G^\star$ obtained from $G$ by removing $\mathcal{K}$; i.e.,
$G^\star:=G\setminus \mathcal{K}$.  Set $\Gamma_j:=\partial G_j$ for the  respective
boundaries and let $\Gamma:=\cup_{j=1}^m \Gamma_j$ denote the boundary of $G$.
Let $\Omega$ denote the unbounded component of $\overline{\mathbb{C}}\setminus \overline{G^\star}$, so that here
$\Omega=\overline{\C}\setminus\overline{G}$; see Figure~\ref{fig:Archi-lakes}. Note that
$\Gamma=\partial G=\partial\Omega$. We refer to $G$ as an archipelago and to $G^\star$ as an archipelago with
lakes.

In this case, $\mu_0=A|_{G}$ and $\mu_1=A|_{G^\star}$, so that  according to Lemma~2.2
in \cite{SSST}, $\mu_0$ is a PS perturbation of $\mu_1$. Further, from Lemma~3.2 in \cite{GPSS},
$\mu_0$ belongs to the class of measures $\textbf{Reg}$ and, in view of Theorem~\ref{thm:main-gen},
so does $\mu_1$.

Here, we will use the notation
$p_n(G,z)$ and $p_n(G^\star,z)$, in place of $p_n(\mu_0,z)$ and $p_n(\mu_1,z)$,
for the corresponding Bergman polynomials.
Also, we will use
$\gamma_n(G)$, $\gamma_n(G^\star)$,
$\|\cdot\|_{L^2(G)}$ and
$\|\cdot\|_{L^2(G^\star)}$, in the place of  $\gamma_n(\mu_0)$,
$\gamma_n(\mu_1)$,  $\|\cdot\|_{L^2(\mu_0)}$, and $\|\cdot\|_{L^2(\mu_1)}$, respectively.

\begin{figure}[t]
\begin{center}
\includegraphics[scale=0.90]{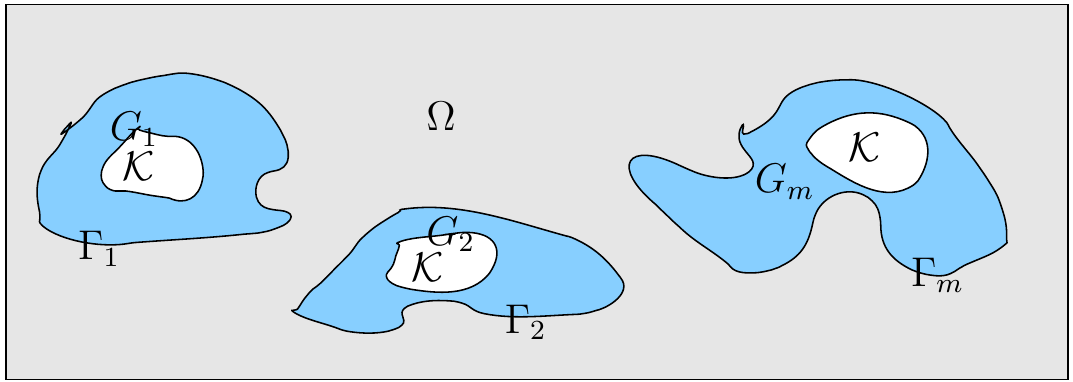}
\caption{Archipelagos with lakes}
\label{fig:Archi-lakes}
\end{center}
\end{figure}

The main task of this section is to describe the asymptotic behaviour of the zeros of the
polynomials $p_n(G^\star,z)$  under various assumptions on the boundary $\Gamma_j$ of each
individual island $G_j$. The behaviour of the zeros of $p_n(G,z)$ was investigated in \cite{GPSS}.

Our main tool is the \textit{normalized counting measure} $\nu_n$ for the zeros of a the
Bergman polynomial $p_n(G^\star,z)$; that is,
\begin{equation}\label{eq:nu}
\nu_n:=\frac{1}{n}\sum_{p_n(G^\star,z)=0}\delta_z,
\end{equation}
where $\delta_z$ is the unit point mass (Dirac delta) at the point $z$.
Note that, since all the zeros of $p_n(G^\star,z)$ lie in the convex hull of $G$, it follows from
Helly's selection theorem that the sequence of measures $\{\nu_n\}_{n=1}^\infty$
has convergent subsequences with limit measures supported on $\textup{Co}(G)$.

With $E$ a compact set in the complex plane of positive capacity, we denote by
$\mu_E$ the \textit{equilibrium measure}
(energy minimizing Borel probability measure on $E$) for the logarithmic potential on $E$; see e.g.,
\cite[Chapter 3]{Ra} and
\cite[Section I.1]{ST}. As is well-known, the support $\mathrm{supp}(\mu_E)$ of $\mu_E$ lies on the
boundary  of the unbounded component of $\overline{\mathbb{C}}\setminus E$.

Our first result is a consequence of the fact that $A|_{G^\star}$ is a measure in the class \textbf{Reg}.
Its proof is given in Section~4.
\begin{theorem}\label{th:ST4.7}
If $\mu$ is any weak-star limit measure of the sequence $\{\nu_n\}_{n\in\mathbb{N}}$ in \eqref{eq:nu}, then $\mu$ is a Borel
probability measure supported on $\overline{G}$ and
$\mu^b=\mu_\Gamma$, where $\mu^b$ is the balayage of $\mu$ out of $\overline{G}$ onto
$\partial\Omega$.
Similarly, the sequence of balayaged counting measures converges to $\mu_E$:
\begin{equation}%\label{}
\nu^b_n\,{\stackrel{*}{\longrightarrow}}\, \mu_\Gamma,\quad n\to\infty,\quad n\in\mathbb{N}.
\end{equation}
\end{theorem}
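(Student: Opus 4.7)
The plan is to combine the regularity of $\mu_1:=A|_{G^\star}$ with the classical Stahl--Totik theory of zero distributions for orthonormal polynomials associated with measures in the class \textbf{Reg}, as developed in \cite{StTo}.

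First I would confirm that $\mu_1\in\textbf{Reg}$. The measure $\mu_0=A|_G$ belongs to \textbf{Reg} by Lemma~3.2 of \cite{GPSS}, and $\mu_0$ is a PS perturbation of $\mu_1$ by example~(i) of Section~\ref{sec:examples} (which invokes Lemma~2.2 of \cite{SSST}). Combining Theorem~\ref{thm:main-gen}(i), which yields $\gamma_n(\mu_1)/\gamma_n(\mu_0)\to 1$, with Proposition~\ref{prop:Co}, which gives $\capy(S_1)=\capy(S_0)=\capy(\Gamma)$, we obtain $\gamma_n(\mu_1)^{1/n}\to 1/\capy(\Gamma)$, so $\mu_1$ is regular.

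Next I would invoke the core Stahl--Totik zero distribution theorem for regular measures: every weak-star subsequential limit $\mu$ of the normalized zero counting measures $\{\nu_n\}$ of $\{p_n(\mu_1,\cdot)\}$ is a Borel probability measure whose logarithmic potential satisfies
\begin{equation*}
U^{\mu}(z) = -g_\Omega(z,\infty) - \log\capy(\Gamma), \quad z\in\Omega.
\end{equation*}
This identity arises from equating two expressions for $\lim_{n\to\infty}|p_n(\mu_1,z)|^{1/n}$ on $\Omega$: on one hand, for regular measures one has $|p_n(\mu_1,z)|^{1/n}\to e^{g_\Omega(z,\infty)}/\capy(\Gamma)$; on the other, writing $Q_n=p_n(\mu_1,\cdot)/\gamma_n(\mu_1)$ with counting measure $\nu_n$, one has $|Q_n(z)|^{1/n}\to e^{-U^\mu(z)}$. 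Since the resulting potential is harmonic in $\Omega$, the measure $\mu$ cannot charge $\Omega$; as the zeros of $p_n(\mu_1,\cdot)$ all lie in $\textup{Co}(\overline{G})$, this forces $\supp(\mu)\subset\overline{G}$. The equilibrium measure $\mu_\Gamma$ has the same potential $-g_\Omega(z,\infty)-\log\capy(\Gamma)$ on $\Omega$, and since the balayage $\mu^b$ of $\mu$ out of $\overline{G}$ onto $\Gamma$ is the unique measure on $\Gamma$ preserving total mass and reproducing the potential of $\mu$ in $\Omega$, uniqueness of the balayage yields $\mu^b=\mu_\Gamma$.

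For the full-sequence convergence $\nu_n^b\stackrel{*}{\longrightarrow}\mu_\Gamma$, I would argue as follows: every subsequence of $\{\nu_n\}$, being supported in the compact set $\textup{Co}(\overline{G})$, has a further weak-star convergent subsequence whose limit $\mu$ satisfies $\mu^b=\mu_\Gamma$ by the previous step; since balayage onto $\Gamma$ is weak-star continuous on families of measures with uniformly compact support (via the integral representation of balayage through harmonic measure of $\Omega$), every such subsequence of $\{\nu_n^b\}$ converges to $\mu_\Gamma$, yielding convergence of the full sequence. The main obstacle will be in the second step: carefully invoking the Stahl--Totik machinery so as to obtain both the precise potential identity and the localization of $\supp(\mu)$ to $\overline{G}$, and in the last step verifying weak-star continuity of the balayage operation in the present multiply-connected setting.
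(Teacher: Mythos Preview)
Your argument is essentially correct and reaches the same conclusion, but the paper takes a shorter route. After noting (exactly as you do) that $A|_{G^\star}\in\textbf{Reg}$, the paper does not unpack the potential-theoretic argument; instead it uses regularity together with \cite[Theorems~3.2.1 \& 3.2.3]{StTobo} to obtain simultaneously
\[
\lim_{n\to\infty}\gamma_n(G^\star)^{1/n}=\frac{1}{\capy(\overline{G})}
\quad\text{and}\quad
\lim_{n\to\infty}\|p_n(G^\star,\cdot)\|_{\overline{G}}^{1/n}=1,
\]
which says that the monic polynomials $p_n(G^\star,\cdot)/\gamma_n(G^\star)$ form an \emph{asymptotically extremal} sequence on $\overline{G}$. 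The entire balayage statement---both the identification $\mu^b=\mu_\Gamma$ for every subsequential limit and the full-sequence convergence $\nu_n^b\stackrel{*}{\to}\mu_\Gamma$---then follows by a single citation of \cite[Theorem~2.3]{MhSa91} (equivalently \cite[Theorem~III.4.7]{ST}). Your approach re-derives the content of that theorem by hand: computing the potential of a limit measure from the $n$-th root asymptotics, invoking uniqueness of balayage, and then arguing continuity of balayage for the full sequence. This is instructive and exposes the mechanism, but it forces you to confront technicalities (convergence of $\frac{1}{n}\log|Q_n|$ to $-U^\mu$ in the presence of zeros possibly lying in $\Omega\cap\textup{Co}(\overline{G})$, and the weak-star continuity of balayage) that the Mhaskar--Saff/Saff--Totik theorem already packages cleanly.
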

By the weak-star convergence of a sequence of measures $\tau_n$ to a measure $\tau$ we mean that,
for any continuous $f$ with compact support in $\mathbb{C},$ there holds
\begin{equation*}%\label{}
\int f d\tau_n \to \int fd\tau, \quad\textup{as }n\to\infty.
\end{equation*}
For properties of balayage  see \cite[Sect. II.4]{ST}.

The next result illustrates that the zero behaviour of polynomials orthogonal with respect to a PS
perturbation of a measure $\mu_1$ can have an asymptotic limit that is different from that of the zeros of
orthogonal polynomials with respect to $\mu_1$. However, as Theorem~\ref{th:ST4.7} asserts, the balayage onto the
boundary of these two limiting distributions of zeros must be the same.

Let $G=\mathbb{D}$, the unit disk, and $\mathcal{K}:=\{z:|z-a|\le \varrho\}$, $|a|+\varrho<1$, $\varrho>0$.
In order to describe the zero behaviour of the sequence $\{p_n(G^\star,z)\}_{n=0}^\infty$, $G^\star=\mathbb{D}\setminus\mathcal{K}$,
we recall that there exists a unique pair of points $z_1$ and $z_2$ that are 
 mutually inverse points with respect to the two circles
$\mathbb{T}:=\partial\mathbb{D}$ and $\{z:|z-a|=\varrho\}$, that is
\begin{equation}\label{eq:zeta12}
z_1\overline{z_2}=1 \quad\textup{and}\quad
(z_1-a)(\overline{z_2-a})=\varrho^2.
\end{equation} 
Let $z_1$ denote the point that lies in $\mathcal{K}$ ($z_2$ will then lie outside $\mathbb{D}$).
We prove in Section~\ref{Proofs} the following.
\begin{proposition}\label{thm:Ex3}\footnote{This result 
was presented by the first author at the 2011 conference\lq\lq Computational
Complex Analysis and Approximation Theory" held in Protaras, Cyprus in honor of Nick Papamichael.}
With the above notation, there exists a subsequence $\mathcal{N}\subset\mathbb{N}$ such that
the normalized zero counting measures for $p_n(G^\star,z)$ satisfy 
\begin{equation}\label{eq:cic}
\nu_{n}\,{\stackrel{*}{\longrightarrow}}\,
\mu_{|z_1|},\quad n\to\infty,\quad n\in\mathcal{N},
\end{equation}
where $\mu_{|z_1|}$ denotes the normalized arclength measure on the circle $|z|=|z_1|$.
\end{proposition}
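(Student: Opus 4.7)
The plan is to combine an explicit asymptotic for $p_n(G^\star,z)$ — derived from the orthogonality on $G^\star$ together with the simple form of the Bergman polynomials on $\mathbb{D}$ — with the potential-theoretic constraints from Theorem~\ref{th:ST4.7} in order to pin down the subsequential limit of $\nu_n$.

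Since $z^n$ is the monic Bergman OP for $A|_{\mathbb{D}}$, the orthogonality of the monic OP $q_n:=p_n(G^\star,\cdot)/\gamma_n(G^\star)$ to $\mathbb{P}_{n-1}$ in $L^2(G^\star)$ yields the self-consistency equation
\begin{equation*}
q_n(z) = z^n + \int_{\mathcal{K}} q_n(\zeta)\,K_{n-1}^{\mathbb{D}}(z,\zeta)\,dA(\zeta),
\end{equation*}
where $K_{n-1}^{\mathbb{D}}$ is the reproducing kernel for $\mathbb{P}_{n-1}$ in $L^2(\mathbb{D})$. The change of variables $\zeta=a+\varrho w$ together with the mean-value property on the disk $\mathcal{K}$ evaluates the basic kernel integral to $\varrho^2 A(z)^n/B(z)^{n+2}$, with $A(z):=a-(|a|^2-\varrho^2)z$ and $B(z):=1-\bar a z$. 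A short algebraic check gives $A(z)-zB(z)=-a(z-z_1)(z-z_2)$, so the defining relations \eqref{eq:zeta12} of $z_1,z_2$ are equivalent to $A(z_j)/B(z_j)=z_j$ for $j=1,2$.

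Combining this with the sharp bound $\|q_n-z^n\|_{L^2(\mathbb{D})}=O(r^n)$ with $r=|a|+\varrho<1$, supplied by Proposition~\ref{thm:disk}, and iterating the self-consistency equation produces a leading-order asymptotic for $q_n(z)$ on $\overline{\mathbb{D}}$ in which the two competing exponentials are $z^n$ and $\varrho^2 A(z)^n/B(z)^{n+2}$. Taking $\tfrac{1}{n}\log|\cdot|$ yields, away from the critical locus, a piecewise-analytic limit for $\tfrac{1}{n}\log|q_n|$ that transitions between $\log|z|$ and a constant across the circle $|z|=|z_1|$, modulo a phase factor $(A(z)/(zB(z)))^n$ that oscillates with $n$.

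Applying Helly's selection theorem to $\{\nu_n\}$ produces a weak-star convergent subsequence $\mathcal{N}\subset\mathbb{N}$ with limit $\mu$; by Theorem~\ref{th:ST4.7}, $\mu^b=\mu_{\partial\mathbb{D}}$. Choosing $\mathcal{N}$ so that the $n$-dependent phase equidistributes forces the limit to inherit circular symmetry, and combining this with the balayage constraint and the potential formula $-U^\mu(z)=\log\max(|z|,|z_1|)$ from the previous step identifies $\mu=\mu_{|z_1|}$. \textbf{The main obstacle} is to prove that, along $\mathcal{N}$, the iterated correction to $q_n$ delivers exactly the circularly symmetric logarithmic potential $\log\max(|z|,|z_1|)$, rather than an anisotropic limit supported on the algebraic curve $|zB(z)|=|A(z)|$ that the naive first-iteration ansatz alone would predict.
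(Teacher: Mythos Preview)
Your proposal is not a proof as it stands, and you say so yourself: the sentence beginning ``\textbf{The main obstacle}'' names a genuine gap, not a technicality.  The iteration of the self-consistency equation produces, at first order, the competing term $\varrho^{2}A(z)^{n}/B(z)^{n+2}$, whose modulus is constant on the non-circular oval $|zB(z)|=|A(z)|$, not on $|z|=|z_{1}|$.  To get from there to the circularly symmetric potential $\log\max(|z|,|z_{1}|)$ you would have to control \emph{all} subsequent iterates and show that their cumulative effect restores rotational symmetry; the phrase ``choosing $\mathcal{N}$ so that the $n$-dependent phase equidistributes forces the limit to inherit circular symmetry'' is a heuristic, not an argument, and it is far from clear how phase averaging in $n$ could convert an anisotropic level set into a circle.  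Nor does the balayage constraint from Theorem~\ref{th:ST4.7} help here: every probability measure on $\overline{\mathbb{D}}$ balayages to $\mu_{\partial\mathbb{D}}$, so $\mu^{b}=\mu_{\partial\mathbb{D}}$ gives no information about the radial or angular structure of $\mu$.

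The paper avoids the iteration entirely.  It works instead with the reproducing kernel $K^{\star}(z,\zeta)$ of $L^{2}_{a}(\mathbb{D})$ equipped with the $L^{2}(G^{\star})$ inner product, for which there is a closed formula (Lemma~3.2 of \cite{SSST}) in terms of the M\"obius map $\phi(z)=(z-z_{1})/(1-z/z_{2})$ sending $G^{\star}$ to an annulus:
\[
K^{\star}(z,\zeta)=\overline{\phi'(\zeta)}\,\phi'(z)\sum_{n\ge 0}\frac{r^{2n}}{\pi\bigl[1-r^{2n}\overline{\phi(\zeta)}\phi(z)\bigr]^{2}}.
\]
From this one reads off the nearest singularity of $K^{\star}(\cdot,\zeta)$ outside $\mathbb{D}$ --- it is $1/\overline{\zeta}$ when $|\zeta|\ge|z_{1}|$ and $z_{2}$ when $|\zeta|<|z_{1}|$ --- and Walsh's maximal-convergence theorem then gives
\[
\limsup_{n\to\infty}|p_{n}(G^{\star},\zeta)|^{1/n}=\max(|\zeta|,|z_{1}|),\qquad \zeta\in\mathbb{D}.
\]
This is exactly the (already circularly symmetric) input needed, and the conclusion \eqref{eq:cic} then follows from a zero-distribution lemma of Mi\~na-D\'\i az \cite[Lemma~4.3]{Mina2013}.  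The point is that the kernel formula, via the conformal map to the annulus, does in one stroke what your infinite iteration would have to accomplish; the circular symmetry is built into the pole structure of $K^{\star}$ and never has to be recovered by an averaging argument.
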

Thus, no matter what the relative position of $\mathcal{K}$, a weak limit of
$\nu_{n}$ will invariably be the arclength measure on a specific circle in $\mathbb{D}$, always centered
at the origin.

By way of illustration, in Figure~\ref{fig4} we plot the zeros of $p_n(G^\star,z)$, for $n=120,140$ and $160$, for
$a=0.2$, $a=0.4$ and $\varrho=0.25$.

\begin{figure}[t]
\begin{center}
\fbox{\includegraphics[scale=0.30]{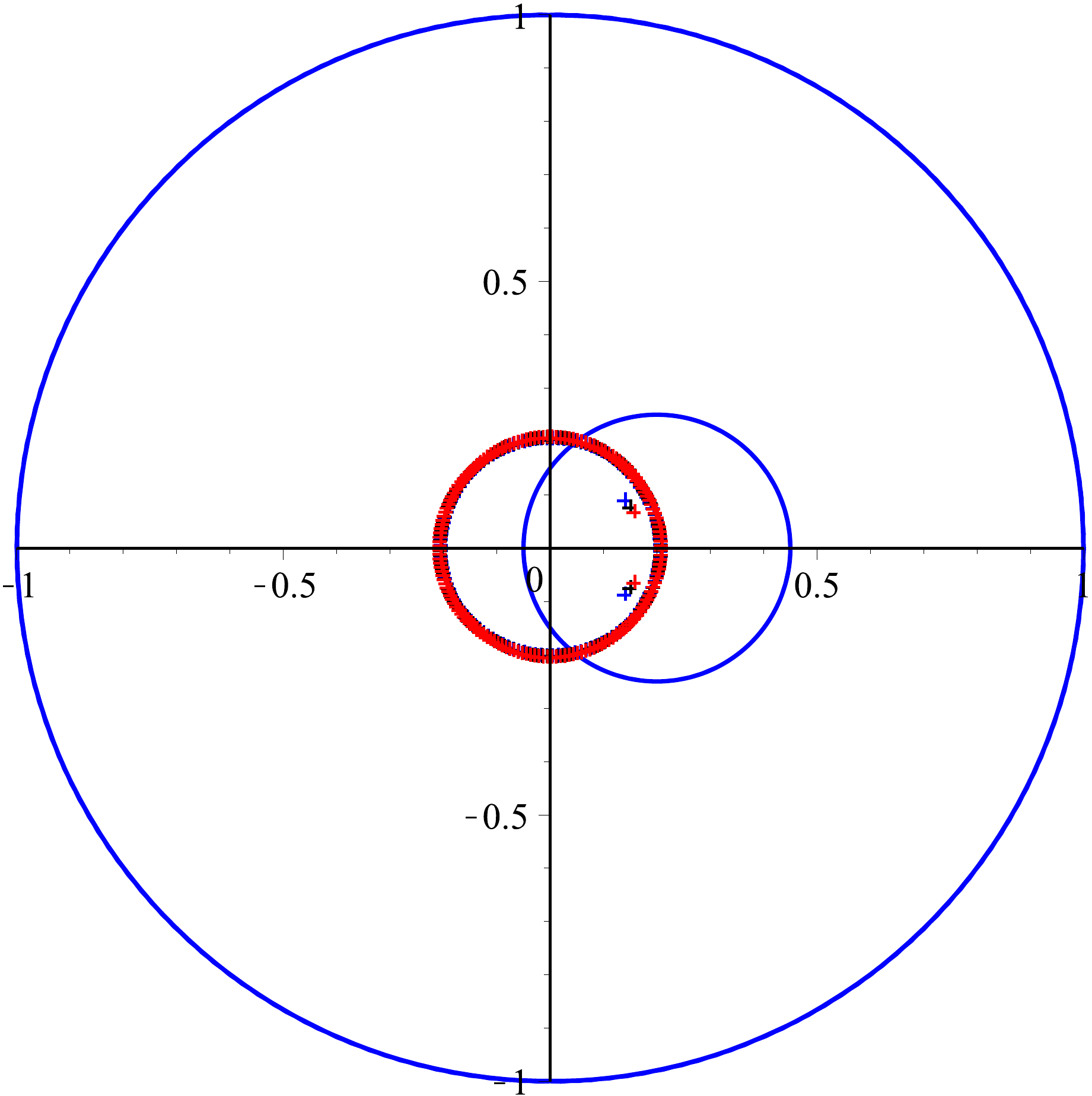}}
\fbox{\includegraphics[scale=0.30]{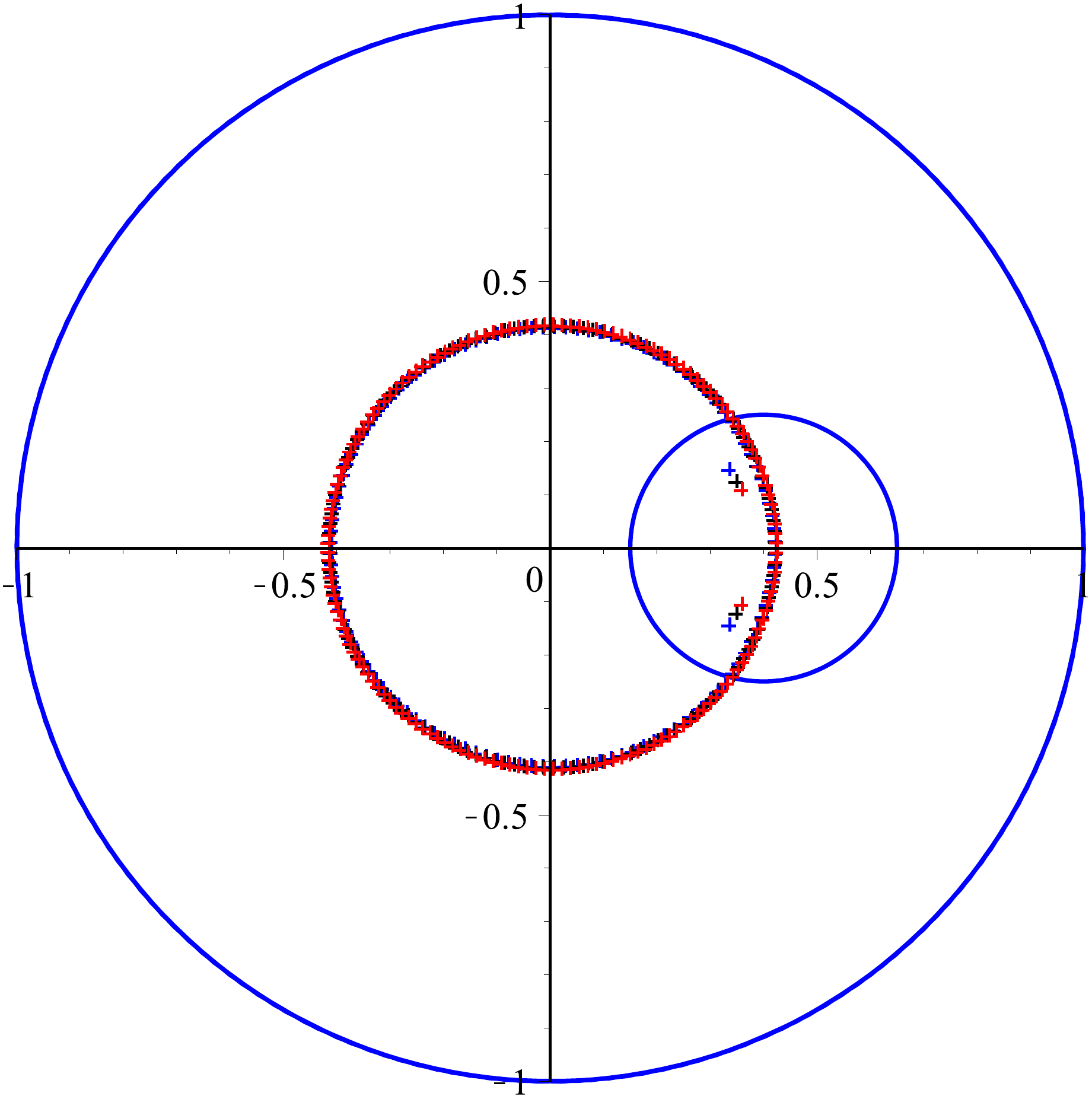}}
\end{center}
\caption{Plots of the zeros of $p_n(G^\star,z)$, for $n=120,140$ and $160$, related to Proposition~\ref{thm:Ex3},
with $a=0.2$ (left) $a=0.4$ (right) and $\varrho=0.25$.}
\label{fig4}
\end{figure}

\medskip
Below we need the notion of an \emph{inward-corner} (IC) point. Following
\cite{SaSt2015}, we say that a point $z_0$ on the boundary $\Gamma_j$ of $G_j$ is an IC point
if there exists a circular sector of the form
$S:=\{z:0< |z-z_0|< r,\,\alpha\pi<\textup{arg}(z-z_0)<\beta\pi\}$
with $\beta-\alpha > 1$  whose closure is contained in $G_j$ except for $z_0$.

The following theorem is an immediate consequence of Corollary 2.2 in \cite{SaSt2015}; cf. the
proof of Theorem~\ref{th:ST4.7} in Section 4.

\begin{theorem}\label{th:ic}
With the notation above, assume that for each $j=1,\ldots,l$ the boundary $\Gamma_j$ of $G_j$ contains
an IC point\footnote{The ordering of $G_j$'s is irrelevant here.}. Then, with $\nu_n$ as in \eqref{eq:nu},
\begin{equation}\label{eq:ic}
\nu_{n}|_{\mathcal{V}}\,{\stackrel{*}{\longrightarrow}}\,
\mu_\Gamma|_{\mathcal{V}},\quad n\to\infty,\quad n\in\mathbb{N},
\end{equation}
where $\mathcal{V}$ is an open set containing $\bigcup_{j=1}^l\overline{G}_j$, such that if $l<m$
the distance of $\overline{\mathcal{V}}$ from $\bigcup_{j=l+1}^m \overline{G}_j$ is positive.
\end{theorem}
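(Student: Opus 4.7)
The strategy is to transfer the IC-point zero asymptotics established in \cite[Corollary~2.2]{SaSt2015}, originally formulated for Bergman polynomials on an archipelago without lakes, to the orthonormal polynomials $p_n(G^\star,z)$ generated by the perturbed measure $\mu_1 = A|_{G^\star}$. The first step is to verify that $\mu_1$ lies in the regularity class \textbf{Reg}. By Lemma~3.2 of \cite{GPSS} (invoked already in Section~3), $\mu_0 = A|_{G} \in \textbf{Reg}$; since $\mu_0$ is a PS perturbation of $\mu_1$ by Lemma~2.2 of \cite{SSST}, Proposition~\ref{cor:inva} immediately yields $\mu_1 \in \textbf{Reg}$.

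Next, I would invoke Corollary~2.2 of \cite{SaSt2015}, whose local conclusion is that if the generating measure is regular and the boundary of an island $G_j$ carries an IC point, then the zero counting measures of the associated orthonormal polynomials, restricted to a sufficiently small open neighborhood $U_j$ of $\overline{G}_j$, converge weak-star to the equilibrium measure $\mu_\Gamma$ restricted to $U_j$. Applying this islandwise to each $j=1,\ldots,l$ and forming $\mathcal{V} := \bigcup_{j=1}^l U_j$ gives the desired convergence in \eqref{eq:ic}; the separation condition (positive distance from $\bigcup_{j=l+1}^m \overline{G}_j$ when $l<m$) is ensured simply by shrinking each $U_j$, since the $\overline{G}_j$'s have pairwise disjoint closures.

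The principal obstacle is that \cite[Corollary~2.2]{SaSt2015} is formally phrased for Bergman polynomials on an archipelago (no lakes), so a direct citation is slightly out of range of the stated hypotheses. This is precisely why the theorem statement recommends \lq\lq cf. the proof of Theorem~\ref{th:ST4.7}\rq\rq: the inputs required by the IC-point argument of \cite{SaSt2015} are the regularity of the orthogonality measure and the resulting $n$-th root asymptotic upper bound for $|p_n(\mu_1,z)|$, together with a lower growth estimate near the IC point deducible from the extremal property \eqref{eq:minimal1}. Both carry over verbatim from $\mu_0 = A|_G$ to $\mu_1 = A|_{G^\star}$ once \textbf{Reg}-membership is secured, because the presence of the lakes $\mathcal{K}$ only removes mass from $G$ and cannot inflate the relevant extremal norms. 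With these ingredients in hand, the local zero-counting argument of \cite{SaSt2015} applies to $p_n(G^\star,z)$ exactly as it does to $p_n(G,z)$, establishing \eqref{eq:ic}.
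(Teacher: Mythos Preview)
Your approach is correct and essentially the same as the paper's: establish $\mu_1=A|_{G^\star}\in\textbf{Reg}$ via the PS-perturbation machinery, then invoke \cite[Corollary~2.2]{SaSt2015}. One clarification on your framing: that corollary (in keeping with the title of \cite{SaSt2015}) is stated for general \emph{asymptotically extremal} monic polynomial sequences on $\overline{G}$, not specifically for lake-free Bergman polynomials, so no adaptation of its proof is required. The cross-reference to the proof of Theorem~\ref{th:ST4.7} is there only because that proof already verifies, from \textbf{Reg}-membership and \eqref{eq:SaTo}, that $\{p_n(G^\star,z)/\gamma_n(G^\star)\}$ is asymptotically extremal on $\overline{G}$; with this in hand, \cite[Corollary~2.2]{SaSt2015} applies directly and yields \eqref{eq:ic}.
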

The result of the theorem remains valid under the weaker condition that
the boundary $G_j$, for each $j=1,\ldots,l$, contains a
\emph{non-convex type singularity}; see \cite{SaSt2015} for details.

Theorem~\ref{th:ic} is illustrated by the following example.

\begin{example}\label{ex:sector-disk}
Consider the case $G=G_1\cup G_2$, where $G_1$ is a circular sector of opening angle $3\pi/2$ and radius
$1$ with center at the origin and $G_2$ is the disk with center at $3.5$ and radius $2/3$ and let
$\mathcal{K}$ be the closed disk inside $G_1$ with center at $1/2$ and radius $1/4$.
\end{example}

\begin{figure}[t]
\begin{center}
\fbox{\includegraphics[scale=0.50]{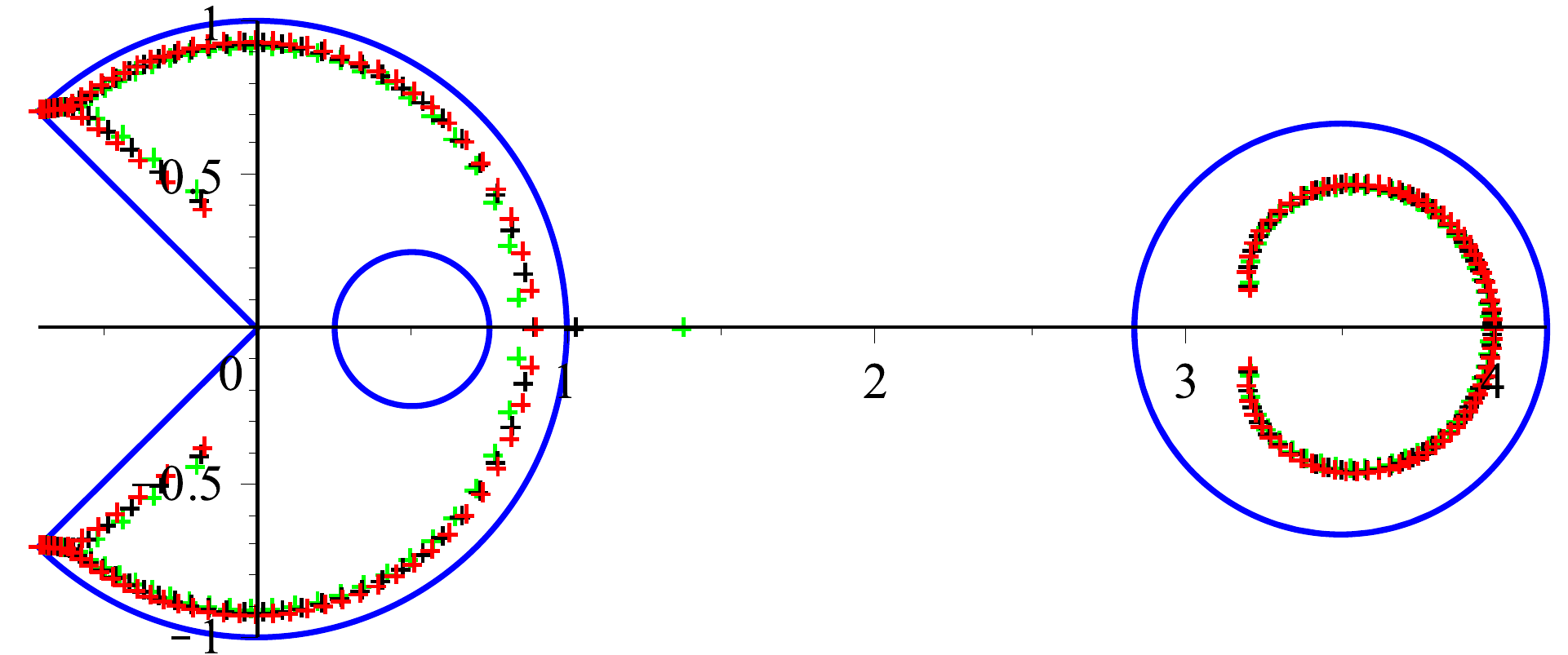}}
\end{center}
\caption{Plots of the zeros of $p_n(G^\star,z)$, for $n=120,140$ and $160$ related to Example~\ref{ex:sector-disk}.}
\label{fig3}
\end{figure}

In Figure~\ref{fig3} we plot the zeros of $p_n(G^\star,z)$, with $G^\star=G\setminus\mathcal{K}$, for $n=120,140$ and $160$.
According to Theorem~\ref{th:ic},
$\nu_{n}|_{\mathcal{V}}\,{\stackrel{*}{\longrightarrow}}\,\mu_\Gamma|_{\mathcal{V}}$, as $n\to\infty$, through
the whole sequence $\mathbb{N}$, where $\mathcal{V}$ is some open neighbourhood of $\overline{G}_1$.
This, in particular, implies that every point of the boundary of $G_1$
will attract zeros of $p_n(G^\star,z)$, a fact which is corroborated by the position of zeros in the plot.
The obvious reluctance of the zeros to
approach the reentrant corner of the sector can be accounted for by the fact that the equilibrium measure at
this corner has zero density.

%As a consequence of Theorem~\ref{th:ic} above and \cite[Theorem III.4.1]{ST}, we have the following:
%\begin{corollary}\label{th:pnzeta-nth}
%Under the hypotheses of Theorem~\ref{th:ic}, for each $G_j$, $j=1,\ldots,l$  there exists a point $\zeta\in %G_j$
%such that
%\begin{equation}%\label{}
%\limsup_{n\to\infty}|p_n(G^\star,\zeta)|^{1/n}=1,\quad n\in\mathbb{N}.
%\end{equation}
%\end{corollary}

We end this section by noting that there is a recent growing interest for asymptotics of Bergman
polynomials with weights supported
on the unit disk; see, e.g. \cite{Sa10}, \cite{Mina2013} and the references therein.

It is evident that much of our results above hold true if the Bergman polynomials $p_n(G^\star,z)$ are
replaced by  weighted Bergman polynomials, defined by the inner product
\begin{equation*}
\langle f,g\rangle_{Gw} := \int_G f(z) \overline{g(z)} w(z)dA(z),
\end{equation*}
for some reasonable weight function $w\in L^2(G)$. See, for example, the book by
Suetin \cite[Chapter~II]{Su74}
and the more recent results of Totik in \cite[Theorems 1.3 \& 1.5]{To09}.
Clearly, the polynomials $\{p_n(G^\star,z)\}$ correspond to the characteristic function
\begin{equation*}
 w(z)=\left\{
\begin{array}{cl}
0,  &\textup{if}\ z\in\mathcal{K},\\
1,  &\textup{otherwise}.
\end{array}
\right.
\end{equation*}

\section{Proofs}\label{Proofs}
This section contains the proofs of  results in the preceding  sections.

\subsection{Proof of Proposition~\ref{prop:Co}}
We shall show that there are no points of $S_2$ in $\Omega=\overline{\mathbb{C}}\setminus\textup{Pc}(S_1)$.
Then, since $S_0=S_1\cup S_2$ we have $S_0\subset\textup{Pc}(S_1)$.
Furthermore, strict inequality cannot hold for the capacities, since ${\capy(S_2\cap\Omega)=0}$; thus $\capy(S_1)=\capy(S_0)$.

To see that there are no points of $S_2$ in $\Omega$ first observe that there cannot be
any isolated points of $S_2$ in
$\Omega$, since at any such point, say $z_0$, we would have from \eqref{eq:main-gen-asu1},
$$
0=\lim_{n\to\infty}\int|p_n(\mu_1,z)|^2d\mu_2(z)\ge\lim_{n\to\infty}|p_n(\mu_1,z_0)|^2\mu_2(\{z_0\});
$$
but by a theorem of Ambroladze \cite[Theorem~1]{Am95}, if $g_\Omega(z,\infty)$ denotes the \textit{Green function} with pole
at infinity associated with $\Omega$, then
$$
\limsup_{n\to\infty}|p_n(\mu_1,z_0)|^{1/n}\ge\exp\{g_\Omega(z_0,\infty)\}>1,
$$
which yields a contradiction. (If the complement  $\overline{\mathbb{C}}\setminus\Omega$ of $\Omega$ has capacity zero, then
$g_\Omega(z,\infty)\equiv\infty$;  see \cite[Appendix V]{StTobo}.)

Next, suppose that $z_0\in S_2\cap\Omega$ and $z_0$ is an accumulation point of $S_2$. Let
$\overline{B}(z_0)\subset\Omega$ be a closed disk centered at $z_0$. By a result of Widom \cite{Wi67}; see also
\cite[Theorem~2.1.1]{StTobo},
there exists an integer $L$ such that each $p_n(\mu_1,z)$ has at most $L$ zeros in $\overline{B}(z_0)$.
Let
$$
q_n(z):=\prod(z-\zeta),\quad \zeta\in\overline{B}(z_0),\quad p_n(\mu_1,\zeta)=0,
$$
and consider a subsequence $\mathcal{N}\subset\mathbb{N}$ such that $q_n(z)$ converges locally
uniformly in $\mathbb{C}$ to a monic polynomial $Q(z)$ of degree at most $L$.
By Corollary~1.1.5 in \cite{StTobo}
\begin{equation}\label{Cor115:SaTobo}
\liminf_{n\to\infty}\left|\frac{p_n(\mu_1,z)}{q_n(z)}\right|^{1/n}\ge\exp\{g_\Omega(z,\infty)\},
\end{equation}
uniformly on $\overline{B_1}(z_0)$, where $\overline{B_1}(z_0)$ is a closed disk centered at $z_0$ and
contained in ${B}(z_0)$. Then, there exists a constant $\varrho>1$ such that
\begin{align*}
0=&\lim_{n\to\infty}\int|p_n(\mu_1,z)|^2d\mu_2(z)\ge\lim_{n\to\infty}\int_{\overline{B_1}(z_0)}|p_n(\mu_1,z)|^2d\mu_2(z)\\
 \ge& C\varrho^n\lim_{n\to\infty,n\in\mathcal{N}}\int_{\overline{B_1}(z_0)}|q_n(\mu_1,z)|^2d\mu_2(z).
\end{align*}
Hence,
$$
0=\lim_{n\to\infty,n\in\mathcal{N}}\int_{\overline{B_1}(z_0)}|q_n(\mu_1,z)|^2d\mu_2(z)=
\int_{\overline{B_1}(z_0)}|Q(z)|^2d\mu_2(z).
$$
This yields a contradiction, since $\mu_2$ has infinitely many points of support in $\overline{B_1}(z_0)$ where $Q(z)$ must be zero.
\qed

\subsection{Proof of Proposition~\ref{prop1}}
Let $\{p_n(z)\}_{n=0}^\infty$ denote the associated sequence of orthonormal polynomials
with respect to the arclength measure on $\Gamma$ and let $\Phi$ denote the conformal map
from the exterior of $\Gamma$ to the exterior of the unit circle, normalized by
$\Phi(\infty)=\infty$ and $\Phi^\prime(\infty)>0$. Then, from \cite[Theorem 2.3]{Su66b}
we have
\begin{equation}\label{eq:Suetin-asy}
p_n(z)=\Phi^n(z)\sqrt{\Phi^\prime(z)}\left[1+O\left(1/n\right)\right],
\end{equation}
uniformly for $z\in\Gamma$. Our assumption on $\Gamma$ implies that $\Phi^\prime$ is finite
and non-vanishing on $\Gamma$ and that $\Phi^{\prime\prime}$ is bounded on $\Gamma$; see e.g.
\cite[Sections 3.3--3.4]{Po92}.

Let $z_0\in\Gamma$ be fixed. Then, from integration along $\Gamma$
we have
\begin{align*}
\int_{z_0}^z p_n(\zeta)d\zeta &= \int_{z_0}^z
\Phi^n(\zeta)\frac{\Phi^\prime(\zeta)}{\sqrt{\Phi^\prime(\zeta)}}\left[1+O\left(1/n\right)\right]d\zeta\\
&= \int_{z_0}^z
\Phi^n(\zeta)\frac{\Phi^\prime(\zeta)}{\sqrt{\Phi^\prime(\zeta)}}d\zeta+O\left(1/n\right),
\quad z\in\Gamma.
\end{align*}
Integrating by parts gives uniformly for $z\in\Gamma$,
\begin{align}\label{eq:Qp}
Q_{n+1}(z)&:=\int_{z_0}^z p_n(\zeta)d\zeta=
\frac{\Phi^{n+1}(z)}{n+1}\frac{1}{\sqrt{\Phi^\prime(z)}}-
\frac{\Phi^{n+1}(z_0)}{n+1}\frac{1}{\sqrt{\Phi^\prime(z_0)}}\nonumber\\
&+\frac{1}{2}\int_{z_0}^z
\frac{\Phi^{n+1}(\zeta)}{n+1}\frac{\Phi^{\prime\prime}(\zeta)}
{[\Phi^{\prime}(\zeta)]^{3/2}}d\zeta+O\left(1/n\right)=O\left(1/n\right).
\end{align}
Now by using Green\rq s formula (see e.g. \cite[p.~10]{Gabook87}) and the Cauchy-Schwarz inequality we have
\begin{align*}
\int_G|p_n(z)|^2dA(z)&=\frac{1}{2i}\int_\Gamma p_n(z)\overline{Q_{n+1}}(z)dz
\le\frac{1}{2}\int_\Gamma |p_n(z)||Q_{n+1}(z)|ds\\
&\le \frac{1}{2}\left[\int_\Gamma |p_n(z)|^2ds\right]^{1/2}\left[\int_\Gamma |Q_{n+1}(z)|^2
ds\right]^{1/2}
\end{align*}
and the required result follows from (\ref{eq:Qp}) because of the normality of the polynomials $p_n$.
\qed

\subsection{Proof of Lemma~\ref{lem:main1}}
We aim first at relating  the leading coefficients $\gamma_n(\mu_0)$ and $\gamma_n(\mu_1)$.
To this end, we note that
\begin{equation}\label{eq:comp-pri-gamma}
\gamma_n(\mu_0)\le \gamma_n(\mu_1),\quad n=0,1,\ldots,
\end{equation}
which is a simple consequence of the minimal property (\ref{eq:minimal1}).
%\begin{equation*}%\label{}
%\begin{alignedat}{1}
%\frac{1}{\gamma_n(\mu_0)}&=\left\|\frac{p_n(\mu_0,\cdot)}{\gamma_n(\mu_0)}\right\|_{L^2(\mu_0)}
%\ge \left\|\frac{p_n(\mu_0,\cdot)}{\gamma_n(\mu_0)}\right\|_{L^2(\mu_1)}=\|z^n+\cdots\|_{L^2(\mu_1)}\\
%&\ge\left\|\frac{p_n(\mu_1,\cdot)}{\gamma_n(\mu_1)}\right\|_{L^2(\mu_1)}
%=\frac{1}{\gamma_n(\mu_1)}.
%\end{alignedat}
%\end{equation*}

The fact that $\beta_n\ge 0$ is evident from the inequality (\ref{eq:comp-pri-gamma}).
To prove (\ref{eq:lem-main1b}) we note first the two obvious relations
\begin{equation}\label{eq:pnG=1-}
\|p_n(\mu_0,\cdot)\|^2_{L^2(\mu_1)}=1-\|p_n(\mu_0,\cdot)\|^2_{L^2(\mu_2)}
\end{equation}
and
\begin{equation}\label{eq:pnG*=1+}
\|p_n(\mu_1,\cdot)\|^2_{L^2(\mu_0)}=1+\|p_n(\mu_1,\cdot)\|^2_{L^2(\mu_2)}.
\end{equation}
Since $\mu_1$ has infinite support, \eqref{eq:pnG=1-} shows that
$$
\|p_n(\mu_0,\cdot)\|_{L^2(\mu_2)}<1.
$$

Furthermore, from  the Parseval identity we have
\begin{align}\label{eq:ParId1}
\|p_n(\mu_1,\cdot)\|^2_{L^2(\mu_0)}&=\sum_{k=0}^n|\langle p_n(\mu_1,\cdot),p_k(\mu_0,\cdot) \rangle_{\mu_0} |^2\nonumber \\
&\ge|\langle p_n(\mu_1,\cdot),p_n(\mu_0,\cdot) \rangle_{\mu_0} |^2,
\end{align}
and
\begin{align}\label{eq:ParId2}
\|p_n(\mu_0,\cdot)\|^2_{L^2(\mu_1)}&=\sum_{k=0}^n|\langle p_n(\mu_0,\cdot),p_k(\mu_1,\cdot) \rangle_{\mu_1} |^2\nonumber\\
&\ge |\langle p_n(\mu_0,\cdot),p_n(\mu_1,\cdot) \rangle_{\mu_1} |^2.
\end{align}

The required upper and lower estimates in (\ref{eq:lem-main1b}) then follow easily from
(\ref{eq:pnG=1-})--(\ref{eq:ParId2}), because
\begin{equation}\label{eq:<pn*,pn>}
\begin{alignedat}{1}
\langle p_n(\mu_1,\cdot),p_n(\mu_0,\cdot)\rangle_{\mu_0}&=\frac{\gamma_n(\mu_1)}{\gamma_n(\mu_0)},
\\
\langle p_n(\mu_0,\cdot),p_n(\mu_1,\cdot)\rangle_{\mu_1}&=\frac{\gamma_n(\mu_0)}{\gamma_n(\mu_1)}.
\end{alignedat}
\end{equation}
Inequality \eqref{eq:compare} follows at once from  \eqref{eq:lem-main1b}.

Next we note the following two relations, which hold for all $n\in\mathbb{N}\cup\{0\}$:
\begin{equation}\label{eq:lem-main2}
\|p_n(\mu_0,\cdot)-p_n(\mu_1,\cdot)\|^2_{L^2(\mu_0)}=\|p_n(\mu_1,\cdot)\|^2_{L^2(\mu_2)}-2\beta_n
%\le \|p_n(G^\star,\cdot)\|^2_{L^2(G)}
\end{equation}
and
\begin{equation}\label{eq:lem-main2-2}
\|p_n(\mu_0,\cdot)-p_n(\mu_1,\cdot)\|^2_{L^2(\mu_1)}=
\frac{2\beta_n}{1+\beta_n}-\|p_n(\mu_0,\cdot)\|^2_{L^2(\mu_2)}.
%\le \|p_n(G^\star,\cdot)\|^2_{L^2(G)}
\end{equation}
These can be readily seen by expanding $\|p_n(\mu_0,\cdot)-p_n(\mu_1,\cdot)\|^2_{L^2(\mu_0)}$ and
$\|p_n(\mu_0,\cdot)-p_n(\mu_1,\cdot)\|^2_{L^2(\mu_1)}$,
%\begin{equation*}%\label{}
%\begin{alignedat}{1}
%\|p_n(\mu_0,\cdot)-p_n(\mu_1,\cdot)\|^2_{L^2(\mu_0)}&=\|p_n(\mu_0,\cdot)\|^2_{L^2(\mu_0)}+
%\|p_n(\mu_1,\cdot)\|^2_{L^2(\mu_0)}\\
%&-2\Re{\langle p_n(\mu_1,\cdot),p_n(\mu_0,\cdot)\rangle_{\mu_0}}
%\end{alignedat}
%\end{equation*}
%and
%\begin{equation*}%\label{}
%\begin{alignedat}{1}
%\|p_n(\mu_0,\cdot)-p_n(\mu_1,\cdot)\|^2_{L^2(\mu_1)}&=\|p_n(\mu_0,\cdot)\|^2_{L^2(\mu_1)}+
%\|p_n(\mu_1,\cdot)\|^2_{L^2(\mu_1)}\\
%&-2\Re{\langle p_n(\mu_1,\cdot),p_n(\mu_0,\cdot)\rangle_{\mu_1}}.
%\end{alignedat}
%\end{equation*}
together with the definition of $\beta_n$ in
(\ref{eq:lem-main1a}) and the four relations in (\ref{eq:pnG=1-}), (\ref{eq:pnG*=1+}) and
(\ref{eq:<pn*,pn>}).
Then, the inequality in \eqref{eq:corbetan-1} is immediate from (\ref{eq:lem-main2-2}),
because $\beta_n\ge 0$.

Finally, \eqref{eq:ratioout} follows from \eqref{eq:corbetan-1}  by working as in obtaining
(d) from (b) in the proof of Theorem 2.1 in \cite{SSST}, because $\textup{Co}(S_0)=\textup{Co}(S_1)$.
\qed

\subsection{Proof of Proposition~\ref{lem:diagkernelout}}
Let $\zeta\in \mathcal{K}$ be fixed. By Runge's theorem there exists a sequence of polynomials $q_n(z)=q_n(z;\zeta)$
such that $q_n(z)$ converges uniformly to zero on the boundary of $\Omega$ (and hence on $S_1$) and also converges to one on a sufficiently small closed disk $\overline{D}(\zeta,\epsilon)\subset\Omega$
centered at $\zeta$. But then it is immediate from the extremal property of Christoffel functions that
$\lambda_n(\mu_1,z)\to 0$, uniformly on $\overline{D}(\zeta,\epsilon)$. Since $\mathcal{K}$ is compact,
it can be covered by finitely many such disks, and the assertion of the proposition follows.
\qed

\subsection{Proof of Theorem~\ref{thm:main-gen}}
Our reasoning is guided by the arguments for Bergman polynomials in \cite{SSST}.

Assertions (i) and (ii) are immediate from Lemma~\ref{lem:main1} and \eqref{eq:lem-main2}.

Assertion (iii) for  $z\in\overline{\mathbb{C}}\setminus{\rm Co}(S_0)$  follows from \eqref{eq:ratioout},
because $\textup{Co}(S_0)=\textup{Co}(S_1)$.
%by working as in obtaining (d) from (b) in the proof of Theorem 2.1 in \cite{SSST},
The claim for $z\in\Omega$ follows from the corresponding result in
$\overline{\mathbb{C}}\setminus{\rm Co}(S_0)$ by using a standard normal family argument.

Next we consider the sequence of numbers
\begin{equation}\label{eq:vare_m}
\varepsilon_m:=\sum_{j=m}^\infty \|p_j(\mu_1,\cdot)\|_{L^2(\mu_2)}^2, \quad m\in\mathbb{N},
\end{equation}
which by the assumption in \eqref{eq:main-gen-asu2} tends to zero as $m\to\infty$.
In contrast, from Proposition~\ref{lem:diagkernelout} it follows that for any given compact set
$\mathcal{K}\subset\Omega$ and positive integers $M$ and $m$,
\begin{equation}\label{eq:pre-lemggs}
\sum_{j=m}^{n}|p_j(\mu_1,z)|^2>M,\quad z\in \mathcal{K},
\end{equation}
for all sufficiently large $n$.

The next result provides an estimate for the relation between $\lambda_n(\mu_0,z)$ and $\lambda_n(\mu_1,z)$
in terms of $\varepsilon_m$ and the sum in (\ref{eq:pre-lemggs}).
\begin{lemma}\label{lem:ggs}
With the notation above, % let $z\in\mathcal{K}$, fix $m\in\mathbb{N}$ and
choose $n>m$ sufficiently large
 so that \textup{(\ref{eq:pre-lemggs})} holds. Then, for $\zeta\in\mathcal{K}$,
\begin{equation}\label{eq:ggs-1}
\lambda_n(\mu_1,z)\le\lambda_n(\mu_0,z)\le\lambda_n(\mu_1,z)\{1+D_{n}(z)\},
\end{equation}
where
\begin{equation}\label{eq:ggs-2}
D_{n}(z):=\varepsilon_m+\frac{\left(1+\varepsilon_m\right)}{M}
\sum_{j=0}^{m-1}|p_j(\mu_1,z)|^2.
\end{equation}
\end{lemma}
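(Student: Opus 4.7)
\textbf{Proof proposal for Lemma~\ref{lem:ggs}.}
The lower bound $\lambda_n(\mu_1,z)\le\lambda_n(\mu_0,z)$ is just \eqref{eq:comp-pri-lambda}, so the whole task is to construct a competitor in the extremal problem defining $\lambda_n(\mu_0,z)$ that yields the claimed upper bound. The plan is to use the \emph{tail} of the reproducing kernel of $\mu_1$ rather than the full kernel; this is the key idea, since the tail is precisely what gets controlled by the summability hypothesis \eqref{eq:main-gen-asu2} via $\varepsilon_m$.

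Specifically, for fixed $z\in\mathcal{K}$ and $n>m$ sufficiently large so that \eqref{eq:pre-lemggs} holds, I would define
\[
P(w):=\frac{\sum_{k=m}^{n}\overline{p_k(\mu_1,z)}\,p_k(\mu_1,w)}{\sum_{k=m}^{n}|p_k(\mu_1,z)|^2},
\]
which is a polynomial of degree at most $n$ with $P(z)=1$ (by direct evaluation). Hence $\lambda_n(\mu_0,z)\le\|P\|^2_{L^2(\mu_0)}=\|P\|^2_{L^2(\mu_1)}+\|P\|^2_{L^2(\mu_2)}$. Using the orthonormality of $\{p_k(\mu_1,\cdot)\}$ in $L^2(\mu_1)$ I get
\[
\|P\|^2_{L^2(\mu_1)}=\frac{1}{\sum_{k=m}^{n}|p_k(\mu_1,z)|^2}.
\]
For the $\mu_2$-norm, I would apply Cauchy--Schwarz to the numerator of $|P(w)|^2$ and then integrate:
\[
\int\!\bigl|\textstyle\sum_{k=m}^{n}\overline{p_k(\mu_1,z)}p_k(\mu_1,w)\bigr|^{2}\!d\mu_2(w)
\le\Bigl(\sum_{k=m}^{n}|p_k(\mu_1,z)|^{2}\Bigr)\sum_{k=m}^{n}\|p_k(\mu_1,\cdot)\|^{2}_{L^{2}(\mu_2)}\le\varepsilon_{m}\sum_{k=m}^{n}|p_k(\mu_1,z)|^{2},
\]
giving $\|P\|^2_{L^2(\mu_2)}\le\varepsilon_m/\sum_{k=m}^n|p_k(\mu_1,z)|^2$. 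Combining these yields
\[
\lambda_n(\mu_0,z)\le\frac{1+\varepsilon_m}{\sum_{k=m}^{n}|p_k(\mu_1,z)|^2}.
\]

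To convert this into a comparison with $\lambda_n(\mu_1,z)=1/\sum_{k=0}^{n}|p_k(\mu_1,z)|^2$, I would write
\[
\frac{\lambda_n(\mu_0,z)}{\lambda_n(\mu_1,z)}\le(1+\varepsilon_m)\,\frac{\sum_{k=0}^{n}|p_k(\mu_1,z)|^{2}}{\sum_{k=m}^{n}|p_k(\mu_1,z)|^{2}}=(1+\varepsilon_m)\Bigl(1+\frac{\sum_{k=0}^{m-1}|p_k(\mu_1,z)|^2}{\sum_{k=m}^{n}|p_k(\mu_1,z)|^2}\Bigr),
\]
and invoke \eqref{eq:pre-lemggs} to replace the denominator $\sum_{k=m}^n|p_k(\mu_1,z)|^2$ by $M$ from below, producing $1+D_n(z)$ after rearrangement.

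The substantive step---really the only non-routine one---is the choice of the test polynomial. The naive choice $K_n(\mu_1,w,z)/K_n(\mu_1,z,z)$, which is the extremal for $\lambda_n(\mu_1,z)$, would force one to control the full Bessel-type sum $\sum_{k=0}^{n}\|p_k(\mu_1,\cdot)\|^2_{L^2(\mu_2)}$, which is not small. Replacing the reproducing kernel by its $m$-tail isolates exactly the sum bounded by $\varepsilon_m$, and the harmless price is the factor $\sum_{k=0}^{m-1}|p_k(\mu_1,z)|^2$ that \eqref{eq:pre-lemggs} then dominates by $M$. Everything else is bookkeeping.
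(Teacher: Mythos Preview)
Your proof is correct and is essentially identical to the paper's own argument: the same tail-of-kernel test polynomial $P(w)=\sum_{k=m}^n\overline{p_k(\mu_1,z)}p_k(\mu_1,w)\big/\sum_{k=m}^n|p_k(\mu_1,z)|^2$, the same Cauchy--Schwarz bound on the $\mu_2$-norm giving the $\varepsilon_m$ factor, and the same algebraic rearrangement using \eqref{eq:pre-lemggs} to arrive at $1+D_n(z)$. Your closing remark on why the full kernel $K_n(\mu_1,\cdot,z)$ would fail (forcing control of $\sum_{k=0}^n\|p_k(\mu_1,\cdot)\|_{L^2(\mu_2)}^2$ rather than the tail) correctly identifies the point of the construction.
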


\begin{proof}
For $z\in\mathcal{K}$ consider the polynomial
\begin{equation}\label{eq:PnDef}
P_n(t):=\frac{\sum _{j=m}^n\overline{p_j(\mu_1,z)}p_j(\mu_1,t)}{\sum _{j=m}^n|p_j(\mu_1,z)|^2}.
\end{equation}
Since $P_n(z)=1$, we have from the minimal property (\ref{eq:Chrfun-def}) of Christoffel functions that
\begin{equation}\label{eq:lamGzle}
\lambda_n(\mu_0,z)\le \int|P_n(t)|^2d\mu_0(t).
\end{equation}

Furthermore, from the orthogonal expansion of $P_n$ with respect to $\mu_1$, we have
\begin{equation}\label{eq:PnoverGs}
\int|P_n(t)|^2d\mu_1(t) =\frac{1}{\sum _{j=m}^n|p_j(\mu_1,z)|^2}.
\end{equation}
By using the Cauchy-Schwarz  inequality for the sum in (\ref{eq:PnDef}) we get that
\begin{equation*}
\int|P_n(t)|^2d\mu_2(t)\le\frac{1}{\sum _{j=m}^n|p_j(\mu_1,z)|^2}\int \sum_{j=m}^n|p_j(\mu_1,t)|^2d\mu_2(t)
\end{equation*}
and, therefore,  we obtain from (\ref{eq:vare_m}) that
\begin{equation}\label{eq:PnoverB}
\int|P_n(t)|^2d\mu_2(t)\le \frac{\varepsilon_m}{\sum _{j=m}^n|p_j(\mu_1,z)|^2}.
\end{equation}

Now adding up (\ref{eq:PnoverGs}) and (\ref{eq:PnoverB}) we conclude, in view of (\ref{eq:lamGzle}), that
\begin{equation}%\label{}
\lambda_n(\mu_0,z)\le \frac{1+\varepsilon_m}{\sum _{j=m}^n|p_j(\mu_1,z)|^2}.
\end{equation}
The upper estimate in (\ref{eq:ggs-1}) then follows from (\ref{eq:Chrfun-pro}),
(\ref{eq:PnoverB}) and \eqref{eq:pre-lemggs}, because
$$
\frac{1}{\sum _{j=m}^n|p_j(\mu_1,z)|^2}=\frac{1}{\sum _{j=0}^n|p_j(\mu_1,z)|^2}
\left[1+\frac{\sum_{j=0}^{m-1}|p_j(\mu_1,z)|^2}{\sum_{j=m}^{n}|p_j(\mu_1,z)|^2}\right].
$$
The lower estimate in \eqref{eq:ggs-1} is simply the comparison property (\ref{eq:comp-pri-lambda}).
\end{proof}

Finally, assertion \eqref{eq:main-gen-asu21} of Theorem~\ref{thm:main-gen} follows at once
from \eqref{eq:ggs-1}, because $\varepsilon_m$ can be made arbitrarily small
and $M$ can be chosen arbitrarily large.
\qed

\subsection{Proof of Proposition~\ref{cor:inva}}
When $\mu_1\in \textbf{Reg}$, the result is evident from Theorem~\ref{thm:main-gen}(i),
in view of Proposition~\ref{prop:Co}.
For $\mu_1\in \textbf{Ratio}(f)$, the result follows at once from Theorem~\ref{thm:main-gen}(iii), because
outside $\textup{Co}(S_0)=\textup{Co}(S_1)$,
$$
\frac{p_n(\mu_0,z)}{p_{n+1}(\mu_0,z)}=\frac{p_n(\mu_0,z)}{p_{n}(\mu_1,z)}\,
\frac{p_{n}(\mu_1,z)}{p_{n+1}(\mu_1,z)}\,\frac{p_{n+1}(\mu_1,z)}{p_{n+1}(\mu_0,z)};
$$
see also the remark following the statement of  Theorem~\ref{thm:main-gen}.
\qed

\medskip
Below we use $\|\cdot\|_{E}$ to denote the uniform norm over a compact set $E$.
We recall that $\Omega$ denotes the unbounded component of
$\overline{\mathbb{C}}\setminus S_1$.

\subsection{Proof of Proposition~\ref{thm:disk}}
Set $\widetilde{\mathcal{K}}:=\{z:|z|\le r\}$, $G:=\mathbb{D}$, $G^\star:=\mathbb{D}\setminus\mathcal{K}$ and consider
$\widetilde{G}:=G\setminus\widetilde{\mathcal{K}}$. Then it is easy to check
that
\begin{equation}\label{eq:pnD}
p_n(G,z)=\gamma_n(G)z^n\quad\mbox{ and }\quad
p_n(\widetilde{G},z)=\gamma_n(\widetilde{G})z^n
\end{equation}
where
\begin{equation}\label{eq:pnD}
\gamma_n(G)=\sqrt{\frac{n+1}{\pi}}\quad\mbox{ and }\quad
\gamma_n(\widetilde{G})=\sqrt{\frac{n+1}{\pi}}\left(1-r^{2n+2}\right)^{-1/2}.
\end{equation}
Since $\widetilde{G}\subset G^\star\subset G$, it follows from the minimal property (\ref{eq:minimal1}) that
$$
\gamma_n(G)\le\gamma_n(G^\star)\le\gamma_n(\widetilde{G}).
$$
Hence, (\ref{eq:pnD}) implies
$$
1\le\frac{\gamma_n(G^\star)}{\gamma_n(G)}=1+\beta_n,
$$
with $\beta_n=O(r^{2n})$, which yields (i). The sharpness claim is evident by letting
$\widetilde{\mathcal{K}}=\mathcal{K}$.

Next, by applying \eqref{eq:corbetan-1}  we get
$$
\|p_n(G^\star,\cdot)-p_n(G,\cdot)\|_{L^2(G^\star)}\le O(r^n),
$$
and (ii) follows because the norms $\|\cdot\|_{L^2(G)}$ and $\|\cdot\|_{L^2(G^\star)}$ are equivalent;
see, e.g., \cite[p.~2447]{SSST}.

Part (iii) then follows from an application of \eqref{eq:ratioout}.

Finally, the uniform estimate in part (iv) follows from the $L^2$-estimate in (ii) by using the  inequality
$$
\|P_n\|_{\ov G}\le Cn\|P_n  \|_{L^2(G)},
$$
which is valid for any polynomial $P_n$ of degree at most $n\in\mathbb{N}$, where the constant $C$
depends on $G$  only; see \cite[p. 38]{Su74}.
\qed

%\subsection{Proof of Proposition~\ref{cor:P2mu}}
%The result is immediate from Proposition~\ref{lem:diagkernelout}
%and the definition \eqref{eq:Chrfun-def} of Christoffel functions.
%\qed

\subsection{Proof of Theorem~\ref{th:ST4.7}}
As was noted in Section~3,
$A|_{G^\star}$ belongs to the class $\textbf{Reg}$. Thus, from \cite{Sa90} and
\cite[Theorems~3.2.1 \& 3.2.3]{StTobo} we have
\begin{equation}\label{eq:SaTo}
\lim_{n\to\infty}\gamma_n(G^\star)^{1/n}=\frac{1}{\capy(\overline{G})}
\quad\mbox{ and }\quad
\lim_{n\to\infty}\|p_n(G^\star,\cdot)\|_{\overline{G}}^{1/n}=1,
\end{equation}
because $\capy(\overline{G})=\capy(G)=\capy(G^\star)$ and every point of $\Gamma$  is regular for the
Dirichlet problem in
$\Omega$; see, e.g.,  \cite[p. 92]{Ra}. Hence  $\{p_n(G^\star,z)/\gamma_n(G^\star)\}_{n=0}^\infty$
constitutes a sequence of asymptotically extremal monic polynomials on $\overline{G}$ and the
result follows from \cite[Theorem 2.3]{MhSa91}; see also \cite[Theorem III.4.7]{ST} and
\cite[Theorem~1.1]{SaSt2015}
\qed

\subsection{Proof of Proposition~\ref{thm:Ex3}}
Let $L^2_a(\mathbb{D})$ denote the Bergman space of functions analytic and square integrable
in $\mathbb{D}$.
We shall need to work with the Hilbert space $L^{2\#}_a(\mathbb{D})$ 
consisting of functions in $L^{2}_a(\mathbb{D})$,
but with inner product
\begin{equation}%\label{eq:innpro-ti}
\langle f,g\rangle_{L^2(G^\star)}:=\int_{G^\star}f(w) \overline{g(w)} dA(w)
\end{equation}
and corresponding norm $\|\cdot\|_{L^2(G^\star)}$.
Since the two norms $\|\cdot\|_{L^2({\mathbb{D})}}$ and ${\|\cdot\|_{L^2(G^\star)}}$ are equivalent; see, e.g.,
\cite[p.~2447]{SSST}, it follows that the two spaces $L^2_a(\mathbb{D})$ and $L^{2\#}_a(\mathbb{D})$ contain
exactly the same set of functions. Furthermore,
the polynomial sequence $\{p_n(G^\star,\cdot)\}_{n=0}^\infty$ forms a complete orthonormal
system in $L^{2\#}_a(\mathbb{D})$ and there exists a kernel function $K^\star(\cdot,\zeta)\in L^{2}_a(\mathbb{D})$ with 
the reproducing property, for all $\zeta\in\mathbb{D}$,
\begin{equation}\label{eq:Krep}
f(\zeta)=\langle f, K^\star(\cdot,\zeta)\rangle_{L^2(G^\star)},\quad\forall f\in L^2_a(\mathbb{D}).
\end{equation}
Consequently,  $K^\star(z,\zeta)$ is given in terms of the orthonormal polynomials in $L^{2\#}_a(\mathbb{D})$ by
\begin{equation}\label{eq:Ksharp}
K^\star(z,\zeta)=\sum_{k=0}^\infty \overline{p_k(G^\star,\zeta)}p_k(G^\star,z),
\end{equation}
where the series on the right hand
side converges  uniformly on compact subsets of $\mathbb{D}\times \mathbb{D}$.

The analytic continuation properties of the kernel $K^\star(\cdot,\zeta)$, $\zeta\in\mathbb{D}$, across the unit circle,
play an essential role in our analysis. To this end, for an analytic function $f$ in $\mathbb{D}$ we define
\begin{equation}\label{eq:rhodef}
\rho(f):=\sup\{R: f(z) \mbox{ has an analytic continuation to } |z|<R\}.
\end{equation}
Note that $1\le R\le\infty$.

With the notation above, it follows from \eqref{eq:Ksharp}, the equivalence of the two norms $\|\cdot\|_{L^2({\mathbb{D})}}$ and ${\|\cdot\|_{L^2(G^\star)}}$ 
 and Walsh maximal convergence theory (see \cite[pp.~130--131]{Wa}) that
\begin{equation}\label{eq:rhoK}
\limsup_{n\to\infty}|p_n(G^\star,\zeta)|^{1/n}=\frac{1}{\rho(K^\star(\cdot,\zeta))},\quad \zeta\in\mathbb{D}.
\end{equation}
(Note that \eqref{eq:rhoK} is an analogue of the Cauchy-Hadamard formula.)
This implies that  the analytic continuation properties of the kernel $K^\star(\cdot,\zeta)$ determine the  \lq\lq$\limsup$\rq\rq  
behaviour of the orthonormal polynomials
$p_n(G^\star,\zeta)$, for $\zeta\in\mathbb{D}$.

We denote by  $\phi$ a conformal mapping 
\begin{equation}\label{eq:mob}
w=\phi(z)=(z-z_1)/(1-z/z_2)
\end{equation}
of $G^\star$ onto the annulus $A_r:=\{w:r<|w|<1\}$,
where $r$ is called the \textit{conformal module} of $G^\star$. 
Then, by \cite[Lemma~3.2]{SSST},
\begin{equation}\label{Krep-in-phi}
K^\star(z,\zeta)=\overline{\phi^\prime(\zeta)}{\phi^\prime(z)}\sum_{n=0}^\infty\frac{r^{2n}}{\pi[1-r^{2n}\overline{\phi(\zeta)}{\phi(z)}]^2},
\end{equation}
which is valid for any $\zeta\in\mathbb{D}$. 

Therefore, by using the form of $\phi$, we conclude that $K^\star(z,\zeta)$ is a meromorphic
function with poles at $z=z_2$ and at the points $z=\phi^{-1}\left(1/{r^{2n}\overline{\phi(\zeta)}}\right)$, $n=0,1,2,\ldots$.
Since by symmetry we have $\phi^{-1}\left(1/{\overline{\phi(\zeta)}}\right)=1/{\overline{\zeta}}$, $\zeta\in\mathbb{D}$,
it follows easily that
\begin{equation}\label{eq:rho-zeta}
\rho(K^{\star}(\cdot,\zeta))=\left\{
\begin{array}{cl}
\frac{1}{|\zeta|}, &\mathrm{if}\ |\zeta|\ge |z_1|, \\
 |z_2|,            &\mathrm{if}\ |\zeta|<|z_1|.
\end{array}
\right.
\end{equation}
The latter equation, in conjunction with \eqref{eq:zeta12} and \eqref{eq:rhoK}, leads to 

\begin{equation}\label{eq:pn-limsup-zeta}
\limsup_{n\to\infty}|p_n(G^\star,\zeta)|^{1/n}=\left\{
\begin{array}{cl}
|\zeta|, &\mathrm{if}\ |\zeta|\ge |z_1| ,\\
 |z_1|,            &\mathrm{if}\ |\zeta|<|z_1|.
\end{array}
\right.
\end{equation}

Consider now the sequence of monic polynomials
$$
q_n(z):=\frac{p_n(G^\star,z)}{\gamma_n(G^\star)}=z^n+\cdots,\quad n=0,1,\ldots,
$$
and note that, in view of \eqref{eq:SaTo}, we have
$$
\lim_{n\to\infty}\gamma_n(G^\star)^{1/n}=1,
$$
where we made use of the fact that $\capD=1$. 

Now let $E$ denote the closed disk $|z|\le|z_1|$ and set $g(z)=z$. Then, it follows from \eqref{eq:pn-limsup-zeta} that
\begin{equation}\label{eq:dE}
\limsup_{n\to\infty}|q_n(z)|^{1/n}= |g(z)|,\quad z \in\partial E.
\end{equation}
Furthermore, for any $\zeta$ in the interior of  $E$,
there exists a subsequence $\mathcal{N}_{\zeta}$ of $\mathbb{N}$ such that
\begin{equation}\label{eq:qn-lim-zeta}
\lim_{n\to\infty}|q_n(\zeta)|^{1/n}=|z_1|,\quad  n\in\mathcal{N}_{\zeta},
\end{equation}

Finally, we note that the equilibrium measure of $E$ coincides with $\mu_{|z_1|}$, the normalized arclength measure of the circle $|z|=|z_1|$.
Since the \textit{logarithmic potential} $U^{\mu_{|z_1|}}(z)$ of $\mu_{|z_1|}$ satisfies (see, e.g., \cite[p.~45]{ST}):
\begin{equation*}\label{eq:arcmeas}
U^{\mu_{|z_1|}}(z)=\left\{
\begin{array}{cl}
\log\frac{1}{|z_1|}, &\mathrm{if}\ z\in\mathbb{C}\setminus E,\\
\log\frac{1}{|z|},     &  \mathrm{if}\ z\in E,
\end{array}
\right.
\end{equation*}
the result \eqref{eq:cic} then follows from \eqref{eq:dE}--\eqref{eq:qn-lim-zeta} by an application of Lemma~4.3 in \cite{Mina2013}.

\qed

\bigskip
{\bf{Acknowledgements.}} The authors are grateful to the two referees for
their helpful suggestions, especially for a shortened proof of Proposition~\ref{lem:diagkernelout}.

\bibliographystyle{amsplain}
%\bibliography{Stbibmay2016}\def\cprime{$'$}

\providecommand{\bysame}{\leavevmode\hbox to3em{\hrulefill}\thinspace}
\providecommand{\MR}{\relax\ifhmode\unskip\space\fi MR }
% \MRhref is called by the amsart/book/proc definition of \MR.
\providecommand{\MRhref}[2]{%
  \href{http://www.ams.org/mathscinet-getitem?mr=#1}{#2}
}
\providecommand{\href}[2]{#2}

\end{document}